\newtheorem{thm}{Theorem}[section]
\newtheorem{cor}[thm]{Corollary}
\newtheorem{lemma}[thm]{Lemma}
\newtheorem{prop}[thm]{Proposition}
\theoremstyle{definition}
\newtheorem{defn}[thm]{Definition}
\newtheorem{remark}[thm]{Remark}
\newcommand{\bb}[1]{\mathbb{#1}}
\newcommand{\cal}[1]{\mathcal{#1}}
\newcommand{\Angle}[1]{\left\langle #1 \right\rangle}
\newcommand{\tr}{\mathrm{tr}}
\newcommand{\cl}[1]{\overline{#1}}
\newcommand{\Aut}{\mathrm{Aut}}
\newcommand\numberthis{\addtocounter{equation}{1}\tag{\theequation}}
\newcommand{\M}{\mathbb{M}}
\title{A synchronous game for binary constraint systems}
\author{Se-Jin Kim}
\address{Pure Mathematics Dept., University of Waterloo,
Waterloo, ON, N2L 3G1, Canada}
\email{s362kim@uwaterloo.ca}
\author{Vern Paulsen}
\address{Pure Mathematics Dept. and Institute for Quantum Computing, University of Waterloo,
Waterloo, ON, N2L 3G1, Canada}
\email{vpaulsen@uwaterloo.ca}
\author{Christopher Schafhauser}
\address{Pure Mathematics Dept., University of Waterloo,
Waterloo, ON, N2L 3G1, Canada}
\email{cschafhauser@uwaterloo.ca}
\thanks{The authors are partially supported by NSERC grants}
\begin{document}
\maketitle

\begin{abstract} Recently, W. Slofstra proved that the set of quantum correlations is not closed. We prove that the set of synchronous quantum correlations is not closed, which implies his result, by giving an example of a synchronous game that has a perfect quantum approximate strategy but no perfect quantum strategy. We also exhibit a graph for which the quantum independence number and the quantum approximate independence number are different. We prove new characterisations of synchronous quantum approximate correlations and synchronous quantum spatial correlations. We solve the synchronous approximation problem of Dykema and the second author, which yields a new equivalence of Connes' embedding problem in terms of synchronous correlations.

\end{abstract}
\section{Introduction}

W. Slofstra \cite{Slofstra17} solved one of the Tsirelson conjecture's by exhibiting a binary constraint system (BCS) game which has a perfect qa-strategy but no perfect q-strategy.  His result shows, in particular, that the set of probabilistic quantum correlations is not closed. In this paper, building on his work, we prove the somewhat stronger result that the set of synchronous quantum correlations is not closed.  We do this by constructing a synchronous game that has a perfect qa-strategy but no perfect q-strategy.  We also construct a graph $G$ with the property that $\alpha_q(G) < \alpha_{qa}(G)$, where these numbers are the quantum independence number and approximate quantum independence number of $G$, respectively.  These results rely on a major part of Slofstra's construction, but then use the ideas on quantum graph isomorphisms from \cite{graphisom}. To carry out this program, we need to extend the material in \cite{graphisom} on quantum graph isomorphisms to the cases of qa-strategies and qc-strategies.
The extension of the ideas of \cite{graphisom} to qc-strategies is minor, but the extension to qa-strategies builds on new characterisations of synchronous qa-strategies which are the main technical results of this paper. 

Section 2 contains many preliminary results and definitions, some background material on quantum correlations and synchronous games, and some remarks on the material in \cite{graphisom} to include other types of quantum isomorphisms.

Section 3 includes some new results on the connections between synchronous correlations and traces. In particular, we prove that the sets of synchronous q-correlations and synchronous qs-correlations are equal. We prove that in the correspondence between synchronous correlations and traces on C$^*$-algebras, established in \cite{estChromNo}, that synchronous qa-correlations arise from amenable traces. We also answer the {\it synchronous approximation problem} from \cite{DyPa}, which shows that Connes' embedding problem is equivalent to proving the equality of the set of synchronous qa-correlations with the set of synchronous qc-correlations. This in turn leads to another proof of Ozawa's \cite{Oz} equivalence of Connes' embedding problem with the equality of the set of all qa-correlations with the set of all qc-correlations.

In Section 4, we prove that the sets of synchronous q-correlations and synchronous qa-correlations are not equal. This implies Slofstra's \cite{Slofstra17} result that the set of quantum correlations is not closed, when the number of inputs and outputs is sufficiently large.

Finally, in Section 5, we return to the study of quantum versions of graph parameters and exhibit a graph for which $\alpha_q(G) < \alpha_{qa}(G)$.

\section{Preliminaries on Correlations and Quantum Graph Theory}

Suppose that Alice has $n_A$ quantum experiments each with $m_A$ outcomes and Bob has $n_B$ quantum experiments each with $m_B$ outcomes and that their combined labs are in some combined, possibly entangled, state.  We let $p(a,b|x,y)$ denote the conditional probability that if Alice conducts experiment $x$ and Bob conducts experiment $y$ then they get outcomes $a$ and $b$, respectively. The $n_An_Bm_Am_B$-tuple
\[\big( p(a,b|x,y) \big)_{1 \le x\le n_A, 1 \le y \le n_B, 1 \le a \le m_A, 1 \le b \le m_B}\]
of real numbers is, informally, called a {\it quantum correlation}.  There are several different mathematical models that can be used to describe these values, denoted by the subscripts, $q, qs, qa$ and $qc$ and the Tsirelson problems are concerned with whether or not these different mathematical models yield the same sets.  Due to the work of Slofstra \cite{Slofstra17}, we now know that in some cases these sets are different.

We now recall these sets formal definitions. For $t \in \{ q,qs, qa, qc \}$, we let $C_t(n_A, n_B ,m_A, m_B) \subseteq \bb R^{n_An_Bm_Am_B}$ denote the set of all possible tuples $\big( p(a,b|x,y) \big)$ that can be obtained using the model $t$.

In particular, $p(a,b|x,y) \in C_q(n_A, n_B,m_A, m_B)$ if and only if there exist finite dimensional Hilbert spaces $\cal H_A$ and $\cal H_B$, orthogonal projections $E_{x,a} \in B(\cal H_A), 1 \le x \le n_A, 1 \le a \le m_A$ satisfying $\sum_{a=1}^{m_A} E_{x,a} = I_{\cal H_A}, \, \forall x$,  orthogonal projections $F_{y,b} \in B(\cal H_B), 1 \le y \le n_B, 1 \le b \le m_B$ satisfying $\sum_{b=1}^{m_B} F_{y,b} = I_{\cal H_B}, \, \forall y$ and a unit vector $\psi \in \cal H_A \otimes \cal H_B$ such that
\[ p(a,b|x,y) = \langle E_{x,a} \otimes F_{y,b} \psi, \psi \rangle.\]

The set $C_{qs}(n_A,n_B,m_A,m_B)$ is defined similarly, except the condition that the Hilbert spaces $\cal H_A$ and $\cal H_B$ be finite dimensional is dropped.

It is known that the closures of these sets are the same and we denote the closure of these sets by $C_{qa}(n_A,n_B,m_A,m_B)$

The set $C_{qc}(n_A,n_B,m_A,m_B)$ is defined by eliminating the tensor product and instead having a single Hilbert space $\cal H$, a unit vector $\psi \in \cal H$, together with orthogonal projections $E_{x,a}, F_{y,b} \in B(\cal H)$ satisfying
\begin{enumerate}
  \item $E_{x,a}F_{y,b} = F_{y,b}E_{x,a}$ for all $a,b,x,y$,
  \item $\sum_{a=1}^{m_A} E_{x,a} = \sum_{b=1}^{m_B} F_{y,b} = I_{\cal H}$ for all $x, y$, and
  \item $p(a,b|x,y) = \langle E_{x,a}F_{y,b} \psi, \psi \rangle$ for all $a, b, x, y$.
\end{enumerate}
In each of the cases, i.e., for $t \in \{ q,qs,qa,qc \}$,
when $n_A=n_B =n$ and $m_A=m_B=m$, we set $C_t(n,m) = C_t(n,n,m,m).$

A correlation $\big( p(a,b|x,y) \big) \in C_t(n,m)$ is called {\it synchronous} provided that whenever $a \ne b, \,\, p(a,b|x,x) =0, \, \forall 1 \le x \le n$. We write $C_t^s(n,m)$ for the subset of synchronous correlations. Characterizations of synchronous correlations in terms of traces are known for the cases $t=q, qc$.  In Section 3, we give characterizations of synchronous correlations for the remaining cases, $t= qs, qa$.

By a {\it finite input-output game}, we mean a tuple $\cal G= (I_A, I_B, O_A, O_B, V)$ where $I_A, I_B, O_A, O_B$ are finite sets, representing the inputs that Alice and Bob can receive and the outputs that they can produce, respectively, and a function
\[ V: I_A \times I_B \times O_A \times O_B \to \{0,1 \} \]
called the {\it rule} or {\it predicate} function, where $V(x,y,a,b)=1$ means that if Alice and Bob receive $(x,y) \in I_A \times I_B$ and produce outputs $(a,b) \in O_A \times O_B$ then they win the game and if $V(x,y,a,b)=0$, then they lose the game.

A game is called {\it synchronous} provided that $I_A = I_B$, $O_A = O_B$ and the function $V$ satisfies $V(x,x,a,b) =0, \, \forall x, \forall a \ne b$.

Given a game, a correlation $\big( p(a,b|x,y) \big) \in C_t(|I_A|,|I_B|, |O_A|, |O_B|)$ is called a {\it perfect} or {\it winning} $t$-correlation, if the probability that it produces a losing output is 0, i.e., provided that
\[ V(x,y,a,b)=0 \implies p(a,b|x,y) =0.\]
When a game has a perfect $t$-correlation, then we say that the game possesses a perfect $t$-strategy.
Note that if a game is synchronous, then any perfect correlation must be synchronous.

From the definition of the set $C_{qa}(n_A, n_B, m_A, m_B)$ it readily follows that a game possesses a perfect qa-strategy if and only if for every $\epsilon >0$, there is a q-correlation $\big(p(a, b|x, y)\big)$ in $C_q(n_A, n_B, m_A, m_B)$ satisfying
\[ V(x,y,a,b)=0 \implies p(a,b|x,y) < \epsilon. \]

Every synchronous game $\cal G$ has a unital *-algebra $\cal A(\cal G)$ affiliated with it (possibly the zero algebra), defined by generators and relations. It has generators
\[ \{ E_{x,a}: 1 \le x \le n, 1 \le a \le m \} \]
satisfying the relations
\begin{enumerate}
  \item $E_{x,a}=E_{x,a}^*=E_{x,a}^2$ for all $a$ and $x$,
  \item $\sum_{a=1}^m E_{x,a} = I$ for all $x$, and
  \item for all $a$, $b$, $x$, and $y$, if $V(x,y,a,b)=0$, then $E_{x,a}E_{y,b} = 0$.
\end{enumerate}
One of the results of \cite{HMPS} is that a synchronous game $\cal G$ has a perfect q-strategy if and only if $\cal A(\cal G)$ has a unital *-representation as operators on a non-zero, finite dimensional Hilbert space.  Thus, a synchronous game $\cal G$ has a perfect q-strategy if and only if one can find projections $E_{x,a}$ on a finite dimensional Hilbert space satisfying the above relations for the given rule $V$.  Similarly, $\cal G$ has a perfect qc-strategy if and only if $\cal A(\cal G)$ has a unital *-representation into a C$^*$-algebra with a trace. The results of Section 3 will show that $\cal G$ has a perfect $qa$-strategy if and only if  $\cal A(\cal G)$ approximately has unital *-representations on non-zero, finite-dimensional Hilbert spaces; more precisely, $\cal A(\cal G)$ has a unital *-representation on $\cal R^\omega$, the tracial ultrapower of the hyperfinite $\mathrm{II}_1$-factor $\cal R$.
For readers not familiar with this ultrapower construction, more details can be found in \cite{BrownOzawa}.

There are two families of synchronous games, both involving graphs, that we wish to recall.

Let $G=(V,E)$ be a finite undirected graph without loops. That is,  $V$ is a finite set of {\it vertices} and $E \subseteq V \times V$ denotes the set of {\it edges}, $(v,v) \notin E, \, \forall v \in V$, since it contains no loops, and $(v,w) \in E \implies (w,v) \in E$, since it is undirected. We let $K_n$ denote the graph on $n$ vertices such that $(v,w) \in E, \, \forall v \ne w$. Given two graphs $G=(V(G), E(G))$ and $H=(V(H),E(H))$ by a {\it graph homomorphism from $G$ to $H$} we mean a function $f: V(G) \to V(H)$ satisfying
\[ (v,w) \in E(G) \implies (f(v), f(w)) \in E(H). \]
We write $\exists \, G \to H$ to indicate that there is a graph homomorphism from $G$ to $H$.

Many graph parameters can be defined in terms of graph homomorphisms. The {\it chromatic number} of $G$ is
\[\chi(G) = \min \{ c: \exists \, G \to K_c \}.\]
The {\it clique number} of $G$ is
\[ \omega(G) = \max \{ c: \exists \, K_c \to G \},\]
and the {\it independence number} of $G$ is
\[ \alpha(G)= \omega( \cl G),\]
where $\overline{G}=(V, \overline{E})$ denotes the {\it complement} of $G$; i.e., the graph with the same vertex set but for $v \ne w, \, (v,w) \in \overline{E} \iff (v,w) \notin E$.

Given graphs $G$ and $H$, the {\it graph homomorphism game from $G$ to $H$} is the synchronous game with inputs $V(G)$, outputs $V(H)$ and rule function
\[ V(v,w,x,y) = 0 \iff \big( (v,w) \in E(G) \,\, \text{and} \,\, (x,y) \notin E(H) \big) \,\, \text{or} \,\, \big( v=w \,\, \text{and} \,\, x \ne y \big).\]
For $t \in \{ q, qs, qa, qc \}$, we write $\exists \, G \stackrel{t}{\rightarrow} H$ to indicate that the graph homomorphism game from $G$ to $H$ has a perfect $t$-strategy.

In parallel with the above characterizations we set:
\[ \chi_t(G) = \min \{ c: \exists \, G \stackrel{t}{\to} K_c \}, \,\, \omega_t(G) = \max \{ c: \exists \, K_c \stackrel{t}{\to} G \}, \,\, \alpha_t(G) = \omega_t( \cl G).\]

It is not hard to verify that for complete graphs,
\[ \chi(K_n) = \chi_q(K_n) = \chi_{qs}(K_n)= \chi_{qa}(K_n) = \chi_{qc}(K_n) = n \]
and that
\[ \alpha(K_n)=\alpha_q(K_n) = \alpha_{qs}(K_n) = \alpha_{qa}(K_n) = \alpha_{qc}(K_n) = 1. \]
Indeed, by \cite{HMPS}, we have that
\[n = \chi(K_n) \ge \chi_q(K_n) \ge \chi_{qs}(K_n) \ge \chi_{qa}(K_n) \ge \chi_{qc}(K_n) \ge \chi_{hered}(K_n) = n.\]
The fact that for $t \in \{ q, qs, qa, qc \}$,  $\chi_t(K_n)=n$ implies that if there exists $K_n \stackrel{t}{\to} K_c$ then $n \le c$.
This in turn implies that
\[ c= \alpha(K_c) \le \alpha_q(K_c) \le \alpha_{qs}(K_c) \le \alpha_{qa}(K_c) \le \alpha_{qc}(K_c) \le c,\]
where the last inequality follows since $\alpha_{qc}(K_c)$ is the largest $n$ for which $K_n \stackrel{qc}{\to} K_c$.

The second game that we shall need is the {\it $(G, H)$-isomorphism game} defined in \cite{graphisom}.  This game is intended to capture the concept of two graphs being isomorphic. It is a synchronous game with input set and output set both equal to $V(G) \cup V(H)$ where we view the vertex sets as disjoint. We refer the reader to \cite{graphisom} for the rules of this game. For $t \in \{ q, qs, qa, qc \}$ we write $G \cong_t H$ to indicate that there is a perfect $t$-strategy for the $(G,H)$-isomorphism game. In \cite{graphisom}, they only introduced and studied the cases $t=q$ and $t=ns$ (which we have not introduced here).

However, we shall use the fact that since this is a synchronous game, it will have an affiliated *-algebra with generators and relations that can be used to characterize when perfect $t$-strategies exist.  In fact, the generators and relations for the *-algebra of the game are precisely the relations $(IQP_d)$ in \cite{graphisom}.  We now recall the *-algebra $\cal A(\cal G)$ corresponding to the $(G, H)$-isomorphism game $\cal G$.  First we need some notation.  Given vertices $g, g' \in V(G)$ and $h, h' \in V(H)$, write $\operatorname{rel}(g, g') = \operatorname{rel}(h, h')$ if any of the following hold:
\begin{enumerate}
  \item $g = g'$ and $h = h'$;
  \item $(g, g') \in E(G)$ and $(h, h') \in E(H)$;
  \item $g \neq g'$, $(g, g') \notin E(G)$, $h \neq h'$, and $(h, h') \notin E(H)$.
\end{enumerate}
The *-algebra $\cal A(\cal G)$ is generated by elements
\[ \{ X_{g, h} : g \in V(G), h \in V(H) \} \]
subject to the relations
\[ X_{g, h} = X_{g, h}^* = X_{g, h}^2, \quad \text{and} \quad \sum_{h' \in V(H)} X_{g, h'} = \sum_{g' \in V(G)} X_{g', h} = 1 \]
for all $g \in V(G)$ and $h \in V(H)$ and
\[ \operatorname{rel}(g, g') \neq \operatorname{rel}(h, h') \quad \Rightarrow \quad X_{g, h} X_{g', h'} = 0 \]
for all $g, g' \in V(G)$ and $h, h' \in V(H)$.


We end this preliminary section with some notation that will be used throughout.  Let $\bb F(n, m)$ denote the group freely generated by $n$ elements of order $m$ and let $\mathrm{C}^*(\bb F(n, m))$ denote the universal group C$^*$-algebra of $\bb F(n, m)$.  For $x = 1, \ldots, n$, let $u_x$ be the unitary in $\mathrm{C}^*(\bb F(n, m))$ corresponding to the $x$th generator of $\bb F(n, m)$.  If $\omega_m$ denotes a primitive $m$th roots of unity, the spectral values of $u_x$ are $\omega_m^i$ for $i = 1, \ldots, m$.  Let $e_{x, i}$ denote the spectral projection of $u_x$ at the spectral value $\omega_m^i$.  Then we have $e_{x, i}$ is a projection for all $x$ and $i$ and $\sum_i e_{x, i} = 1$ for all $x$.

Conversely, a C$^*$-algebra $\cal A$ and projections $e_{x, i} \in \cal A$ for $1 \leq x \leq n$ and $1 \leq i \leq n$ such that $\sum_i e_{x, i} = 1$ for all $x$, the element $v_x = \sum_i \omega_m^i e_{x, i}$ is a unitary in $\cal A$ with order $m$.  Hence there is a unique *-homomorphism $\mathrm{C}^*(\bb F(n, m)) \rightarrow \cal A$ determined by $u_i \mapsto v_i$.  A straight forward calculation shows these constructions are inverses of each other and hence $\mathrm{C}^*(\bb F(n, m))$ is the universal C$^*$-algebra generated by projections $e_{x, i}$ for $1 \leq x \leq n$ and $1 \leq j \leq m$ such that $\sum_i e_{x,i} = 1$ for all $x$.

\section{Characterizations of Synchronous strategies}

In \cite{estChromNo} it was shown that synchronous quantum strategies arise from various families of traces. In particular, it was shown that $p(i,j|v,w) \in C^s_{qc}(n,m)$ if and only if there is a tracial state $\tau: C^*(\bb F(n,m)) \rightarrow \bb C$ such that $p(i,j|v,w) = \tau(e_{v,i}e_{w,j})$ and $p(i,j|v,w) \in C^s_q(n,m)$ if and only if there was a tracial state as before such that in addition the GNS representation of $\big( C^*(\bb F(n,m)), \tau \big)$ is finite dimensional. But at the time no characterization were given of the traces that arise from synchronous quantum spatial correlations or synchronous quantum approximate correlations.  In this section we provide characterizations of those two types of traces.

\begin{defn}\label{defn:amenableTrace}
Let $\cal A \subseteq B(\cal H)$ be a C$^*$-algebra.  A tracial state $\tau$ on $\cal A$ is called {\it amenable} provided there is a state $\rho$ on $B(\cal H)$ such that $\rho|_{\cal A} = \tau$ and $\rho(u T u^*) = \rho(T)$ for all $T \in B(\cal H)$ and all unitaries $u \in \cal A$.
\end{defn}

By an application of Arveson's Extension Theorem, the amenability of $\tau$ is independent of the choice of faithful representation of $\cal A$.  The following is due to Kirchberg in \cite[Proposition 3.2]{Kirchberg:AmenableTraces} (see also Theorem 6.2.7 in \cite{BrownOzawa}).  Here $\cal R$ denotes the hyperfinite $\text{II}_1$-factor, $\omega$ is a free ultrafilter over the positive integers, $\cal R^{\omega}$ is the corresponding tracial ultrapower.  See Appendix A in \cite{BrownOzawa} for the relevant definitions.

\begin{thm}\label{amenableTrace}
Suppose $\cal A$ is a separable C*-algebra and $\tau$ is a tracial state on $\cal A$.  The following are equivalent:
\begin{enumerate}
  \item the tracial state $\tau$ is amenable;
  \item there is a *-homomorphism $\varphi : \cal A \rightarrow \cal R^\omega$ with a completely positive, contractive lift $\cal A \rightarrow \ell^\infty(\cal R)$ such that $\tr \circ \varphi = \tau$;
  \item there is a sequence of completely positive, contractive maps $\varphi_k : A \rightarrow \bb M_{d(k)}$ such that
  \[ \| \varphi_k(ab) - \varphi_k(a)\varphi_k(b) \|_2 \rightarrow 0 \quad \text{and} \quad \operatorname{tr}_{d(k)}(\varphi_k(a)) \rightarrow \tau(a) \]
  for all $a, b \in \cal A$;
  \item the linear functional $\phi: \cal A \otimes \cal A^{op} \rightarrow \bb C$ defined by $\phi(a \otimes b^{op})= \tau(ab)$ is bounded with respect to the minimal tensor product;
\end{enumerate}
\end{thm}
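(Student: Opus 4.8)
The plan is to establish the equivalence via the cycle $(1)\Rightarrow(2)\Rightarrow(3)\Rightarrow(4)\Rightarrow(1)$, noting in passing that $(2)$ and $(3)$ are interchangeable. Throughout I would work with the GNS representation $\pi_\tau$ of $(\cal A,\tau)$ and the finite von Neumann algebra $M_\tau=\pi_\tau(\cal A)''$, which has separable predual and a faithful normal trace extending $\tau$; it acts standardly on $\cal H_\tau=L^2(M_\tau,\tau)$ with cyclic tracial vector $\hat 1$, and $\pi_\tau^{op}$ denotes the (commuting) right regular representation of $\cal A^{op}$ on $\cal H_\tau$.

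For $(1)\Rightarrow(2)$, I would first promote amenability of $\tau$ on $\cal A$ to the statement that $M_\tau$ carries a hypertrace, equivalently that there is a conditional expectation $B(\cal H_\tau)\to M_\tau$; the subtlety is that the extension furnished by $(1)$ is invariant only under conjugation by unitaries of $\cal A$, so one must upgrade it --- using representation-independence of amenability, Arveson extension, and an averaging argument over the unitary group of $M_\tau$ --- to a genuine hypertrace for $M_\tau$. Then $M_\tau$ is injective, so by Connes' theorem it is hyperfinite, hence the weak closure of an increasing union of finite-dimensional subalgebras; the associated trace-preserving inclusions embed $M_\tau$ trace-preservingly into $\cal R$ and so into $\cal R^\omega$, while the associated trace-preserving conditional expectations assemble into a completely positive contractive lift $M_\tau\to\ell^\infty(\cal R)$. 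Composing with $\pi_\tau$ gives $(2)$. For $(2)\Rightarrow(3)$, I would compress the given lift $\Phi=(\Phi_n)_n$ of $\varphi$ by the trace-preserving conditional expectations of $\cal R$ onto sufficiently large finite-dimensional subalgebras and reindex diagonally along $\omega$: since $\varphi$ is multiplicative and the quotient seminorm defining $\cal R^\omega$ is the $\|\cdot\|_2$-ultralimit, the resulting completely positive contractions $\varphi_k:\cal A\to\bb M_{d(k)}$ satisfy $\|\varphi_k(ab)-\varphi_k(a)\varphi_k(b)\|_2\to 0$ and $\operatorname{tr}_{d(k)}(\varphi_k(a))\to\tau(a)$. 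Conversely $(3)\Rightarrow(2)$ is immediate: $(\varphi_k)_k$ is a completely positive contraction $\cal A\to\prod_k\bb M_{d(k)}\subseteq\ell^\infty(\cal R)$, and composing with the quotient onto $\cal R^\omega$ converts the vanishing multiplicative defect into a $*$-homomorphism $\varphi$ with completely positive contractive lift $(\varphi_k)_k$ and $\operatorname{tr}_{\cal R^\omega}\circ\varphi=\tau$.

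For $(3)\Rightarrow(4)$, let $\Theta_k$ be the vector state $x\otimes y^{op}\mapsto\operatorname{tr}_{d(k)}(xy)$ of the standard representation of $\bb M_{d(k)}\otimes\bb M_{d(k)}^{op}$ on $\bb M_{d(k)}$ equipped with its normalized Hilbert--Schmidt inner product; it is bounded of norm one on $\bb M_{d(k)}\otimes_{\min}\bb M_{d(k)}^{op}$. Since $\varphi_k$ is completely positive and contractive so is $\varphi_k^{op}:\cal A^{op}\to\bb M_{d(k)}^{op}$, hence so is $\varphi_k\otimes\varphi_k^{op}$ on the minimal tensor product, and therefore $\phi_k:=\Theta_k\circ(\varphi_k\otimes\varphi_k^{op})$ is bounded on $\cal A\otimes_{\min}\cal A^{op}$ with norm at most one, uniformly in $k$. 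As $\phi_k(a\otimes b^{op})=\operatorname{tr}_{d(k)}(\varphi_k(a)\varphi_k(b))\to\tau(ab)$ on elementary tensors --- using both limits in $(3)$ together with $|\operatorname{tr}_{d(k)}(z)|\le\|z\|_2$ --- the pointwise limit $\phi$ is bounded with respect to $\|\cdot\|_{\min}$. For $(4)\Rightarrow(1)$, observe $\phi(z)=\langle(\pi_\tau\odot\pi_\tau^{op})(z)\hat 1,\hat 1\rangle$, so $\phi$ is positive and by $(4)$ extends to a state $\Phi$ on $\cal A\otimes_{\min}\cal A^{op}$. Using the trace property one checks that the cyclic vector of the GNS representation of $\Phi$ is already cyclic for $\cal A\otimes 1$ alone, so that representation is (equivalent to) $\pi_\tau\odot\pi_\tau^{op}$ and, being the GNS representation of a state on a minimal tensor product, is $\|\cdot\|_{\min}$-bounded. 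A Kaplansky-density argument then shows the canonical $*$-homomorphism $M_\tau\odot M_\tau'\to B(\cal H_\tau)$ is $\min$-bounded, i.e.\ $M_\tau$ is semidiscrete; semidiscreteness gives injectivity (the elementary direction, via Arveson extension of the finite-dimensional factorizations), hence a conditional expectation $E:B(\cal H_\tau)\to M_\tau$, and $\tau_{M_\tau}\circ E$ then exhibits $\tau$ as an amenable trace.

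I expect the only genuinely hard step to be $(1)\Rightarrow(2)$: the implications $(2)\Leftrightarrow(3)$, $(3)\Rightarrow(4)$ and $(4)\Rightarrow(1)$ are soft manipulations of completely positive maps, ultraproducts, Kaplansky density, and the elementary direction ``semidiscrete $\Rightarrow$ injective'', whereas closing the loop depends on Connes' theorem that an injective von Neumann algebra with separable predual is hyperfinite, together with the two bookkeeping points internal to $(1)\Rightarrow(2)$ --- genuinely promoting the $\cal A$-invariant extension of $\tau$ to a hypertrace on $M_\tau$, and extracting the completely positive contractive lift from the hyperfinite exhaustion. The appeal to Connes' theorem itself I would enter as a citation.
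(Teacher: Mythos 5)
The paper does not prove this theorem: it states it as a known result of Kirchberg and cites \cite[Proposition 3.2]{Kirchberg:AmenableTraces} and Theorem~6.2.7 of \cite{BrownOzawa}, so there is no internal argument to compare against. Your sketch is correct and essentially reproduces the standard proof from those references, with the same organisation and with Connes' theorem (injective $\Rightarrow$ hyperfinite) as the single deep input.

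Two small points of difference. For $(4)\Rightarrow(1)$ you detour through the Effros--Lance theorem (min-boundedness of $M_\tau \odot M_\tau'$ implies semidiscreteness) followed by the easy implication semidiscrete $\Rightarrow$ injective. The shorter and more usual route, which is what \cite{BrownOzawa} does, is to use injectivity of the minimal tensor norm to extend the state $\phi$ to a state $\Psi$ on $B(\cal H_\tau)\otimes_{\min}\cal A^{op}$, set $\rho(T)=\Psi(T\otimes 1)$, and verify that $\rho$ is $\cal A$-central directly: the trace property gives $\Psi\bigl((a\otimes 1-1\otimes a^{op})^*(a\otimes 1-1\otimes a^{op})\bigr)=0$, whence by Cauchy--Schwarz $\Psi((a\otimes 1)z)=\Psi((1\otimes a^{op})z)$ for all $z$, and so $\rho(u^*Tu)=\rho(T)$ for $u\in U(\cal A)$. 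This avoids invoking Effros--Lance, which is itself a substantial theorem. Second, in $(1)\Rightarrow(2)$ you assert a trace-preserving embedding of the hyperfinite finite von Neumann algebra $M_\tau$ into $\cal R$; when $M_\tau$ is not a ${\rm II}_1$ factor (e.g.\ has atoms or a nontrivial centre) this needs a word, say by first tensoring with $\cal R$, which you elide. Neither point is a gap in substance, and the remaining steps --- compression by the conditional expectations of a hyperfinite exhaustion for $(2)\Leftrightarrow(3)$, and the maximally entangled vector state for $(3)\Rightarrow(4)$ --- are exactly as in the cited source.
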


In condition (4), note that if $\phi$ is bounded then for any $x = \sum_i a_i \otimes b_i^{op}$ we have that
\[ \phi(x^*x) = \sum_{i,j} \tau(a_j^*a_ib_ib_j^*) = \sum_{i,j} \tau((a_ib_i)(a_jb_j)^*) \ge 0.\]
Since $\phi(1 \otimes 1) =1$, we see that if $\phi$ is bounded, then $\phi$ is a state.

Recall that $\mathrm{C}^*(\bb F(n, m))$ is generated by a set of $n$ unitaries, $u_v, 1 \le v \le n$, of order $m$ and $e_{v, i}$ denotes the spectral projection of $u_v$ corresponding to the spectral value $\omega_m^i$ where $\omega_m$ is a primitive $m$th root of unity.

\begin{lemma} There is a *-isomorphism $\gamma: \mathrm{C}^*(\bb F(n,m)) \rightarrow \mathrm{C}^*(\bb F(n,m))^{op}$ with $\gamma(u_v^j) = u_v^j, 1 \le v \le n, \, 1 \le j \le m-1$.  Moreover, $\gamma(e_{v,i}) = e_{v,i}$ for $1 \leq v \leq n$ and $1 \leq i \leq m$.
\end{lemma}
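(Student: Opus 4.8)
The point is that the defining universal property of $\mathrm{C}^*(\bb F(n,m))$ is insensitive to the order of multiplication. As recalled in Section~2, $\mathrm{C}^*(\bb F(n,m))$ is the universal unital C$^*$-algebra generated by $n$ unitaries $u_1,\dots,u_n$ with $u_v^m = 1$: for any unital C$^*$-algebra $\cal B$ and any unitaries $w_1,\dots,w_n \in \cal B$ with $w_v^m = 1$ there is a unique unital $*$-homomorphism $\mathrm{C}^*(\bb F(n,m)) \to \cal B$ carrying $u_v$ to $w_v$. Write $\cal A = \mathrm{C}^*(\bb F(n,m))$ and pass to the opposite algebra $\cal A^{op}$, which has the same underlying Banach space and the same involution as $\cal A$, with only the product reversed. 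A product of several copies of a single element is unchanged by this reversal, so in $\cal A^{op}$ each $u_v$ is again a unitary with $u_v^m = 1$. Applying the universal property with $\cal B = \cal A^{op}$ and $w_v = u_v$ yields a unital $*$-homomorphism $\gamma\colon \mathrm{C}^*(\bb F(n,m)) \to \mathrm{C}^*(\bb F(n,m))^{op}$ with $\gamma(u_v) = u_v$, and then $\gamma(u_v^j) = u_v^j$ for all $v$ and $j$, again because powers of a single element do not see the reversal.

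To promote $\gamma$ to a $*$-isomorphism I will produce an inverse. The bookkeeping remark is that any $*$-homomorphism $\pi\colon \cal C \to \cal D$ is simultaneously a $*$-homomorphism $\cal C^{op} \to \cal D^{op}$ (the same underlying map, since linear and $*$-structures are unchanged and products in both sides are reversed compatibly). Combined with the universal property above, this shows $\cal A^{op}$ is itself universal for $n$ unitaries $w_v$ with $w_v^m = 1$: such a tuple in $\cal B$ is also such a tuple in $\cal B^{op}$, giving a unital $*$-homomorphism $\cal A \to \cal B^{op}$ with $u_v \mapsto w_v$, equivalently a unital $*$-homomorphism $\cal A^{op} \to \cal B$ with $u_v \mapsto w_v$, and it is unique since the $u_v$ generate. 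Taking $\cal B = \cal A$ gives a unital $*$-homomorphism $\delta\colon \mathrm{C}^*(\bb F(n,m))^{op} \to \mathrm{C}^*(\bb F(n,m))$ with $\delta(u_v) = u_v$. Then $\delta\circ\gamma$ and $\gamma\circ\delta$ are unital $*$-homomorphisms fixing the generating unitaries, hence equal to the identity maps by uniqueness, so $\gamma$ is a $*$-isomorphism.

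For the final assertion, the spectral projection $e_{v,i}$ is a fixed polynomial in $u_v$: from $u_v^k = \sum_i \omega_m^{ik} e_{v,i}$ one gets, by the usual orthogonality of roots of unity, $e_{v,i} = \frac{1}{m}\sum_{k=0}^{m-1} \omega_m^{-ik} u_v^k$. Since $\gamma$ is linear, unital, and fixes every power $u_v^k$, it follows that $\gamma(e_{v,i}) = e_{v,i}$ for all $v$ and $i$.

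There is no genuine obstacle here; the only thing to watch is the structure of the opposite algebra — that its linear and $*$-structures coincide with those of $\cal A$, so that the phrases "unitary of order $m$" and "spectral projection of $u_v$" carry over verbatim, and that a $*$-homomorphism induces one between the opposite algebras, which is exactly what lets us transfer the universal property to $\cal A^{op}$ and construct $\delta$.
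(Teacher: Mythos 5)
Your proof is correct and constructs the same map $\gamma$ as the paper (the one fixing each generator $u_v$), but you arrive at it by a cleaner route. The paper defines $\gamma$ concretely on the dense $*$-subalgebra spanned by the words $u_{v_1}^{n_1}\cdots u_{v_K}^{n_K}$ (sending each to its reversal, which is exactly the same word computed in the opposite multiplication), extends linearly, and leaves "easily checked" the verification that this extends to a $*$-isomorphism and fixes the $e_{v,i}$. You instead invoke the universal property of $\mathrm{C}^*(\bb F(n,m))$ as the universal C$^*$-algebra generated by $n$ unitaries of order $m$, noting that the $u_v$ remain such unitaries in the opposite algebra; this gives existence of $\gamma$ for free, and the inverse $\delta$ comes from transporting the universal property across $(\cdot)^{op}$. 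Your approach buys you not having to worry about well-definedness or continuity on the dense span, at the cost of the small bookkeeping lemma that a $*$-homomorphism $\cal C \to \cal D$ is also one $\cal C^{op} \to \cal D^{op}$. For the last claim you also spell out the Fourier inversion formula $e_{v,i} = \tfrac{1}{m}\sum_{k=0}^{m-1}\omega_m^{-ik} u_v^k$, which is precisely the "simple computation" the paper elides. Everything checks out.
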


\begin{proof} The words of the form
\[ u_{v_1}^{n_1} \cdots u_{v_K}^{n_K}\]
span a dense *-subalgebra of $\mathrm{C}^*(\bb F(n,m))$.  If we set
\[ \gamma( u_{v_1}^{n_1} \cdots u_{v_K}^{n_K}) = u_{v_K}^{n_K} \cdots u_{n_1}^{n_1},\]
and extend linearly, then it is easily checked that $\gamma$ extends to the desired *-isomorphism.  The second claim is a simple computation.
\end{proof}

\begin{lemma}\label{distToSpectralProj}
Suppose $n \geq 1$ and $p \in \mathbb{M}_n$ is a positive contraction.  If $q$ denotes the spectral projection of $p$ for the interval $[1/2, 1]$, then
\[ \| p - q \|_2 \leq 2 \sqrt{2} \| p - p^2\|_2. \]
\end{lemma}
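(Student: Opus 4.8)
The plan is to work in the tracial $2$-norm coming from the normalized trace $\operatorname{tr}$ on $\mathbb{M}_n$, and to estimate $\|p - q\|_2$ by a pointwise (i.e.\ eigenvalue-by-eigenvalue) comparison with $\|p - p^2\|_2$. Since $p$ is a positive contraction, diagonalize $p$ as $p = \sum_i \lambda_i f_i$ with $\lambda_i \in [0,1]$ and $f_i$ rank-one spectral projections; then $q = \sum_{\lambda_i \ge 1/2} f_i$, and both $p - q$ and $p - p^2$ are diagonal in the same basis with entries $\lambda_i - [\lambda_i \ge 1/2]$ and $\lambda_i - \lambda_i^2 = \lambda_i(1-\lambda_i)$ respectively. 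So the claim reduces, after writing out the $2$-norms as averages over $i$, to the scalar inequality
\[ \big(\lambda - \mathbf{1}_{[1/2,1]}(\lambda)\big)^2 \le 8\, \big(\lambda - \lambda^2\big)^2 \qquad \text{for all } \lambda \in [0,1]. \]
Indeed, if this holds for each eigenvalue then $\|p-q\|_2^2 = \operatorname{tr}((p-q)^2) \le 8\operatorname{tr}((p-p^2)^2) = 8\|p-p^2\|_2^2$, and taking square roots gives $\|p - q\|_2 \le 2\sqrt{2}\,\|p - p^2\|_2$.

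The scalar inequality is elementary and I would verify it by splitting into the two cases. If $\lambda \in [1/2,1]$, then $\lambda - \mathbf{1}_{[1/2,1]}(\lambda) = \lambda - 1 = -(1-\lambda)$, so the left side is $(1-\lambda)^2$, while the right side is $8\lambda^2(1-\lambda)^2 \ge 8 \cdot \tfrac14 \cdot (1-\lambda)^2 = 2(1-\lambda)^2$, which dominates. If $\lambda \in [0,1/2)$, then $\lambda - \mathbf{1}_{[1/2,1]}(\lambda) = \lambda$, so the left side is $\lambda^2$ and the right side is $8\lambda^2(1-\lambda)^2 \ge 8\lambda^2 \cdot \tfrac14 = 2\lambda^2$, again dominating. (In fact one gets the better constant $\sqrt{2}$ in place of $2\sqrt{2}$, but $2\sqrt{2}$ is all that is claimed and all that is needed later.) This handles every $\lambda \in [0,1]$.

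There is essentially no obstacle here: the only mild subtlety is making sure the spectral projection $q$ for the closed interval $[1/2,1]$ is simultaneously diagonalizable with $p$ — which is immediate since $q$ is a spectral projection of $p$ — and keeping track of the normalization in $\|\cdot\|_2$ so that the averaging over eigenvalues is consistent on both sides (it cancels, so the constant is exactly the scalar one). If one wished to avoid diagonalization entirely, an alternative is a functional-calculus argument: set $f(t) = t - \mathbf{1}_{[1/2,1]}(t)$ and $g(t) = t - t^2$ on $[0,1]$, observe $|f(t)| \le 2\sqrt{2}\,|g(t)|$ pointwise, so $f(t)^2 \le 8\, g(t)^2$, and then $(p-q)^2 = f(p)^2 \le 8\, g(p)^2 = 8(p-p^2)^2$ as positive operators, whence $\operatorname{tr}((p-q)^2) \le 8\operatorname{tr}((p-p^2)^2)$. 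Either route closes the proof in a few lines.
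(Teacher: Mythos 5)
Your proof is correct, and it is a genuinely different (and arguably cleaner) route than the paper's. The paper avoids diagonalizing: it splits $p = p_0 + p_1$ with $p_0 = (1-q)p$, $p_1 = qp$, uses the operator inequalities $p_0 - p_0^2 \geq \tfrac12 p_0$ and $p_1 - p_1^2 \geq \tfrac12(q - p_1)$ to bound $\|p_0\|_2$ and $\|p_1 - q\|_2$ each by $2\|p-p^2\|_2$, and then invokes the Pythagorean identity on the orthogonal pieces $p_0$ and $p_1 - q$. You instead diagonalize $p$ (legitimate, since $q$ is a spectral projection of $p$) and reduce to the scalar inequality $|\lambda - \mathbf{1}_{[1/2,1]}(\lambda)| \leq 2\sqrt{2}\,\lambda(1-\lambda)$ on $[0,1]$; the two cases $\lambda < 1/2$ and $\lambda \geq 1/2$ correspond exactly to the paper's $p_0$ and $p_1 - q$ pieces. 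The underlying idea — split the spectrum at $1/2$ — is the same, but your pointwise/functional-calculus version is shorter, has no operator inequalities to justify, and makes the constant-chasing transparent. One small correction to your parenthetical: tracking the scalar estimate exactly gives the optimal pointwise constant $2$, not $\sqrt{2}$ (the ratio $|f(\lambda)|/|g(\lambda)|$ equals $1/(1-\lambda)$ on $[0,1/2)$ and $1/\lambda$ on $[1/2,1]$, both bounded by $2$ with the supremum attained at $\lambda = 1/2$). This is still better than the stated $2\sqrt{2}$, so nothing downstream is affected, but the claimed improvement to $\sqrt{2}$ is not correct.
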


\begin{proof}
Define $p_0 = (1-q)p$ and $p_1 = qp$.  Note that $\|p_i - p_i^2\|_2 \leq \|p - p^2\|_2$ for $i = 0, 1$.  Since $0 \leq p_0 \leq \frac12$, we have
\[ p_0 - p_0^2 = p_0(1 - p_0) \geq \frac12 p_0 \]
and hence $\|p_0\|_2 \leq 2 \|p_0 - p_0^2\|_2$.  Similarly, since $\frac12 q \leq p_1 \leq 1$, we have
\[ p_1 - p_1^2 = p_1(1 - p_1) \geq \frac12 q (1 - p_1) = \frac12 (q - p_1) \]
and hence $\|p_1 - q\|_2 \leq 2 \|p_0 - p_0^2\|_2$.  Since $p_0$ and $p_1 - q$ are orthogonal, the result follows from the Pythagorean identity.
\end{proof}

\begin{lemma}\label{stabilityOfCorrelations}
Given $\varepsilon > 0$ and an integer $m \geq 1$, there is a $\delta > 0$ such that for any integer $d \geq 1$, if $p_1, \ldots, p_m \in \mathbb{M}_d$ are positive contractions with $\|p_i^2 - p_i\|_2 < \delta$ and $\|p_i p_j\|_2 < \delta$ for all $i, j = 1, \ldots, m$ with $i \neq j$, then there are mutually orthogonal projections $q_1, \ldots, q_m \in \mathbb{M}_d$ such that $\|p_i - q_i\|_2 < \varepsilon$ for all $i = 1, \ldots, m$.

If in the statement above, we further require $\| \sum_i p_i - 1 \|_2 < \delta$, then we may arrange for $\sum_i q_i = 1$.
\end{lemma}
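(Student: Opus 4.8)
The plan is to bootstrap from Lemma~\ref{distToSpectralProj} by first cleaning up each $p_i$ individually and then handling the orthogonality defect. For the first part, given $\varepsilon > 0$ and $m$, choose $\delta$ small (to be specified) and let $q_i'$ be the spectral projection of $p_i$ for $[1/2, 1]$. Lemma~\ref{distToSpectralProj} immediately gives $\|p_i - q_i'\|_2 \leq 2\sqrt{2}\,\delta$, so the $q_i'$ are genuine projections close to the $p_i$; by the triangle inequality $\|q_i' q_j'\|_2 \leq \|p_i p_j\|_2 + \|(q_i' - p_i)q_j'\|_2 + \|p_i(q_j' - p_j)\|_2 < \delta + 4\sqrt 2\,\delta$ for $i \neq j$, so the $q_i'$ are \emph{nearly} mutually orthogonal projections. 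The remaining task is a standard "almost orthogonal implies close to orthogonal" perturbation for projections in the $2$-norm.

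To carry out that perturbation I would proceed inductively on $m$. Having produced mutually orthogonal projections $q_1, \ldots, q_{k-1}$ with $\|q_i - q_i'\|_2$ small, consider $q_k'$ compressed by $r := 1 - \sum_{i<k} q_i$: set $\tilde p_k = r q_k' r$. Since each $\|q_i q_k'\|_2$ is small (it is controlled by $\|q_i - q_i'\|_2$ plus $\|q_i' q_k'\|_2$), $\tilde p_k$ is $2$-norm close to $q_k'$ and is a positive contraction with $\|\tilde p_k - \tilde p_k^2\|_2$ small; applying Lemma~\ref{distToSpectralProj} again produces a projection $q_k \leq r$ with $\|q_k - q_k'\|_2$ small, hence $\|q_k - p_k\|_2$ small, and $q_k$ is orthogonal to $q_1, \ldots, q_{k-1}$ by construction. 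Tracking the constants through $m$ steps and choosing $\delta$ small enough at the outset relative to $\varepsilon$ and $m$ gives $\|p_i - q_i\|_2 < \varepsilon$ for all $i$. The one point requiring care is that each $2$-norm estimate must be uniform in the ambient dimension $d$; this is automatic because $\|\cdot\|_2 = \tr_d(\,\cdot^*\,\cdot)^{1/2}$ is normalized, every inequality used (Pythagoras, submultiplicativity of $\|\cdot\|_2$ against the operator norm, Lemma~\ref{distToSpectralProj}) is dimension-free, and the induction has only $m$ steps.

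For the second, "unital" part, suppose in addition $\|\sum_i p_i - 1\|_2 < \delta$. After producing the mutually orthogonal $q_1, \ldots, q_m$ as above, we have $\|\sum_i q_i - 1\|_2 \leq \|\sum_i p_i - 1\|_2 + \sum_i \|q_i - p_i\|_2$, which is small. Write $q = \sum_{i=1}^m q_i$; then $1 - q$ is a projection of small $2$-norm, i.e. of small normalized rank. I would absorb it into, say, $q_1$: replace $q_1$ by $q_1 + (1-q)$, which is still a projection, is still orthogonal to $q_2, \ldots, q_m$, and satisfies $\|(q_1 + (1-q)) - p_1\|_2 \leq \|q_1 - p_1\|_2 + \|1 - q\|_2$, still small. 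Now $\sum_i q_i = 1$ as required. Re-choosing $\delta$ once more at the start to swallow the extra additive errors (a fixed finite number of them, with coefficients depending only on $m$) completes the argument.

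The main obstacle is purely bookkeeping: one must fix the dependence of $\delta$ on $\varepsilon$ and $m$ correctly so that the errors accumulated over the $m$-step induction (each step roughly multiplying the current error by a constant and adding a controlled term) still land below $\varepsilon$, and then shrink $\delta$ once more for the unital adjustment. There is no conceptual difficulty beyond Lemma~\ref{distToSpectralProj} and the Pythagorean identity; the care lies in making every estimate manifestly independent of $d$ and in ordering the choice of constants ($\varepsilon, m \mapsto$ intermediate tolerances $\mapsto \delta$) so the quantifiers come out right.
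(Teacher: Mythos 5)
Your proposal is correct and follows essentially the same route as the paper: both rest on Lemma~\ref{distToSpectralProj} applied to the compression of the next element by $r = 1 - \sum_{i<k} q_i$, iterated, and both handle the unital case by absorbing the small defect projection $1 - \sum_i q_i$ into one of the $q_i$'s. The only cosmetic difference is that you first replace each $p_i$ by its own spectral projection $q_i'$ before orthogonalizing, whereas the paper compresses $p_{m+1}$ directly by $r$ (getting $q_1, \ldots, q_m$ from the inductive hypothesis applied to $p_1, \ldots, p_m$); this merely splits one application of the lemma into two.
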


\begin{proof}
We prove the result by induction on $m$.  When $m = 1$, this is immediate Lemma \ref{distToSpectralProj}.  Assume the result holds for an integer $m \geq 1$.  Fix $\varepsilon > 0$ and define $\varepsilon_0 = \varepsilon/(40 m + 3)$.   Let $\delta_0 > 0$ be the constant obtained by applying the current lemma to $m$ and $\varepsilon_0$ and define $\delta := \min\{\delta_0, \varepsilon_0 \}$.  Suppose $d \geq 1$ and $p_1, \ldots, p_{m+1} \in \mathbb{M}_d$ are positive contractions as above.  By the choice of $\delta$, there are mutually orthogonal projections $q_1, \ldots, q_m \in \mathbb{M}_d$ such that
\[ \|p_i - q_i\|_2 < \varepsilon_0 < \varepsilon. \]
Since $\|p_i p_{m+1}\|_2 < \delta$ for all $i = 1, \ldots, m$, we have
\[ \|q_i p_{m+1}\|_2 < \varepsilon_0 + \delta < 2 \varepsilon_0. \]
Define $r = (1 - q_1 - \cdots q_m)$ and define $p = p_{m+1}$.  Then
\[ \| r p r - p \|_2 \leq  2 \| r p - p \|_2 \leq 2 \sum_{i=1}^m \| q_i p \|_2 < 4 m \varepsilon_0. \]
Now, note that
\begin{align*}
  \| (r p r)^2 - r p r \|_2 &= \| (r p r)^2 - p^2 \|_2 + \| p^2 - p \|_2 + \|p - r p r\|_2 \\ &\leq 3 \| r p r - p \|_2 + \|p^2 - p\|_2 < 12 m \varepsilon_0 + \delta < (12 m + 1) \varepsilon_0.
\end{align*}
By the previous lemma, if $q_{m+1}$ denotes the spectral projection of $r p r$ corresponding to the interval $[1/2, 1]$, then
\[ \| q_{m+1} - r p r \|_2 < 2\sqrt{2} (12 m + 1) \varepsilon_0 . \]
Therefore,
\[ \| q_{m + 1} - p \|_2 < 2 \sqrt{2} (12 m + 1)\varepsilon_0 + 4 m \varepsilon_0  < (40 m + 3)\varepsilon_0 = \varepsilon. \]
Note that each of the projections $q_1, \ldots q_m$ are orthogonal to $r$ by construction.  As $q_{m+1}$ is a spectral projections of $r p r$, we also have that each of the projections $q_1, \ldots, q_m$ is orthogonal to $q_{m+1}$.  This completes the proof of the first part of the lemma.

To see the final sentence holds, fix $m \geq 1$ and $\varepsilon > 0$.  Let $\varepsilon_0 = \varepsilon / (m + 2)$ and let $\delta_0$ be the constant given by applying the first part of the lemma to $m$ and $\varepsilon_0$.  Define $\delta = \min \{ \varepsilon_0, \delta_0 \}$.  Suppose $d \geq 1$ and $p_1, \ldots, p_m \in \mathbb{M}_d$ are projections such that
\[ \| p_i - p_i^2 \|_2 < \delta, \quad \| p_i p_j \|_2 < \delta, \quad \text{and} \quad \big\|\sum_k p_k - 1 \big\|_2 < \delta \]
for all $i, j = 1, \ldots, m$ with $i \neq j$.  By the choice of $\delta$, there are mutually orthogonal projections $q_1', q_2, q_3, \ldots q_m \in \mathbb{M}_d$ such that $\|p_1 - q_1'\|_2 < \varepsilon_0$ and $\|p_i - q_i\|_2 < \varepsilon_0$ for $i = 2, \ldots, m$.  Now, define $q_1'' = 1 - q_1' - \sum_{i=2}^m q_i$ and note that $\|q_1''\|_2 < (m+1)\varepsilon_0$.  To complete the proof, define $q_1 = q_1' + q_1''$.
\end{proof}

\begin{thm}\label{syncapproxthm}
Fix integers $n, m \geq 1$.  For $\big(p(i, j | v, w)\big) \in \bb R^{n^2m^2}$, the following are equivalent:
\begin{enumerate}
  \item $\big(p(i, j |v, w)\big) \in C_{qa}^s(n, m)$;
  \item there are synchronous correlations $\big(p_k(i, j | v, w)\big) \in C_q^s(n, m)$ with
  \[ p_k(i, j | v, w) \rightarrow p(i, j|v, w) \quad \forall \,\, i, j, v, w; \]
  \item there is an amenable trace $\tau$ on $\mathrm{C}^*(\mathbb{F}(n, m))$ such that
  \[ \tau(e_{v, i} e_{w, j}) = p(i, j | v, w) \quad \forall \, \, i, j, v, w; \]
  \item there are projections $f_{v, i} \in \cal R^\omega$ such that $\sum_i f_{v, i} = 1$ for all $v$ and
  \[ \tr(f_{v, i} f_{w, j}) = p(i, j | v, w) \quad \forall \, \, i, j, v, w. \]
\end{enumerate}
\end{thm}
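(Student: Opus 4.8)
The plan is to prove the cycle of implications $(1) \Rightarrow (3) \Rightarrow (4) \Rightarrow (2) \Rightarrow (1)$. Two of these are immediate. For $(2) \Rightarrow (1)$: if $p_k \in C_q^s(n,m)$ and $p_k \to p$, then $C_q^s(n,m) \subseteq C_q(n,m)$ gives $p \in \overline{C_q(n,m)} = C_{qa}(n,m)$, and $p$ is synchronous because each $p_k$ is. For $(3) \Rightarrow (4)$: since $\mathrm{C}^*(\bb F(n,m))$ is separable, the implication $(1)\Rightarrow(2)$ of Theorem~\ref{amenableTrace} produces a $*$-homomorphism $\varphi \colon \mathrm{C}^*(\bb F(n,m)) \to \cal R^\omega$ with $\tr \circ \varphi = \tau$; setting $f_{v,i} = \varphi(e_{v,i})$ gives projections with $\sum_i f_{v,i} = \varphi(1) = 1$ and $\tr(f_{v,i}f_{w,j}) = \tau(e_{v,i}e_{w,j}) = p(i,j|v,w)$.

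The substantive direction is $(1) \Rightarrow (3)$. I would start from a realization of $p$ as a limit of $q$-correlations $p_k \in C_q(n,m)$, given by finite-dimensional Hilbert spaces $\cal H_A^{(k)}, \cal H_B^{(k)}$, projection-valued measures $(E^{(k)}_{v,i})_i$ on $\cal H_A^{(k)}$ and $(F^{(k)}_{w,j})_j$ on $\cal H_B^{(k)}$, and unit vectors $\psi_k$; note the $p_k$ themselves need not be synchronous. The universal property of $\mathrm{C}^*(\bb F(n,m))$, together with the identification $\mathrm{C}^*(\bb F(n,m))^{op} \cong \mathrm{C}^*(\bb F(n,m))$ established above, gives $*$-homomorphisms $\pi_k$, $\sigma_k$ with $\pi_k(e_{v,i}) = E^{(k)}_{v,i}$ and $\sigma_k(e_{w,j}^{op}) = F^{(k)}_{w,j}$, hence a state $\phi_k = \langle (\pi_k \otimes \sigma_k)(\,\cdot\,)\psi_k, \psi_k \rangle$ on the minimal tensor product $\mathrm{C}^*(\bb F(n,m)) \otimes_{\min} \mathrm{C}^*(\bb F(n,m))^{op}$ with $\phi_k(e_{v,i} \otimes e_{w,j}^{op}) = p_k(i,j|v,w)$. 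Using weak-$*$ compactness of the state space (and separability, to extract a sequence) I pass to a limit state $\phi$ with $\phi(e_{v,i} \otimes e_{w,j}^{op}) = p(i,j|v,w)$. Now synchronicity of $p$ enters: since $p(i,j|v,v) = 0$ for $i \neq j$ and the $e_{v,i}$ sum to $1$, one gets $\phi(e_{v,i} \otimes 1) = \phi(e_{v,i} \otimes e_{v,i}^{op}) = \phi(1 \otimes e_{v,i}^{op})$. Passing to the GNS triple $(\cal K, \rho, \xi)$ of $\phi$ and writing $\pi(a) = \rho(a \otimes 1)$, $\pi'(b) = \rho(1 \otimes b^{op})$ (commuting representations), expanding $\|\pi(e_{v,i})\xi - \pi'(e_{v,i})\xi\|^2$ and using the last display forces $\pi(e_{v,i})\xi = \pi'(e_{v,i})\xi$ for all $v, i$; since $\pi'$ commutes with $\pi$, this propagates along words to $\rho(a \otimes 1)\xi = \rho(1 \otimes a^{op})\xi$ for every $a$, and then a routine manipulation shows $\tau(a) := \phi(a \otimes 1)$ is a trace on $\mathrm{C}^*(\bb F(n,m))$ with $\phi(a \otimes b^{op}) = \tau(ab)$. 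In particular $\tau(e_{v,i}e_{w,j}) = p(i,j|v,w)$, and since $\phi$ is a bounded functional on the minimal tensor product, the implication $(4) \Rightarrow (1)$ of Theorem~\ref{amenableTrace} shows $\tau$ is amenable.

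For $(4) \Rightarrow (2)$ I would realize $\cal R^\omega$ as a matricial tracial ultraproduct $\prod_\omega \bb M_{d(k)}$ and lift the projections $f_{v,i}$ to sequences $(g^{(k)}_{v,i})_k$ of positive contractions in $\bb M_{d(k)}$. The relations $f_{v,i}^2 = f_{v,i}$, $f_{v,i}f_{v,j} = 0$ for $i \neq j$, and $\sum_i f_{v,i} = 1$ translate into $\|(g^{(k)}_{v,i})^2 - g^{(k)}_{v,i}\|_2 \to 0$, $\|g^{(k)}_{v,i}g^{(k)}_{v,j}\|_2 \to 0$, and $\|\sum_i g^{(k)}_{v,i} - 1\|_2 \to 0$ along $\omega$. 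Applying Lemma~\ref{stabilityOfCorrelations} (the version giving $\sum_i q_i = 1$) coordinatewise in $v$ produces, for $k$ in suitable members of $\omega$, mutually orthogonal projections $(q^{(k)}_{v,i})_i$ with $\sum_i q^{(k)}_{v,i} = 1$ and $\|g^{(k)}_{v,i} - q^{(k)}_{v,i}\|_2 \to 0$ along $\omega$, so $(q^{(k)}_{v,i})_k$ still represents $f_{v,i}$ in the ultraproduct. Each tuple $(q^{(k)}_{v,i})_{v,i}$ defines a $*$-homomorphism $\mathrm{C}^*(\bb F(n,m)) \to \bb M_{d(k)}$, hence a synchronous $q$-correlation $p_k(i,j|v,w) := \tr_{d(k)}(q^{(k)}_{v,i}q^{(k)}_{w,j})$, and $p_k(i,j|v,w) \to \tr(f_{v,i}f_{w,j}) = p(i,j|v,w)$ along $\omega$; as only finitely many indices occur, this yields an honest sequence in $C_q^s(n,m)$ converging to $p$.

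I expect the main obstacle to be $(1) \Rightarrow (3)$: because the approximants $p_k$ are merely $q$-correlations and need not be synchronous, they do not individually correspond to traces, so the argument must first pass to the weak-$*$ limit of states on $\mathrm{C}^*(\bb F(n,m)) \otimes_{\min} \mathrm{C}^*(\bb F(n,m))^{op}$ and only then exploit synchronicity of the limiting correlation to collapse that state into the form $\phi(a \otimes b^{op}) = \tau(ab)$; securing the minimal (rather than maximal) tensor norm throughout is exactly what makes the amenable-trace conclusion of Theorem~\ref{amenableTrace}(4) available. By comparison, $(4) \Rightarrow (2)$ is bookkeeping once Lemma~\ref{stabilityOfCorrelations} is in hand, and the remaining implications are formalities.
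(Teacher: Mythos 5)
Your proof is correct and follows essentially the same approach as the paper: both hinge on taking a weak-$*$ limit of the states on $\mathrm{C}^*(\bb F(n,m)) \otimes_{\min} \mathrm{C}^*(\bb F(n,m))^{op}$ and using synchronicity of the limit to collapse to a trace for $(1)\Rightarrow(3)$, and both use Lemma~\ref{stabilityOfCorrelations} to perturb approximate projections to exact ones when producing genuine synchronous $q$-correlations. The only cosmetic difference is that you close the cycle via $(4)\Rightarrow(2)$ by lifting projections from a matricial ultraproduct, whereas the paper proves $(3)\Rightarrow(2)$ directly from Kirchberg's c.p.c.\ almost-multiplicative characterization of amenable traces — two faces of the same Theorem~\ref{amenableTrace}.
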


\begin{proof}
It is clear that (2) implies (1).  To see (1) implies (3), assume that $\big(p(i,j|v,w)\big)$ is a correlation in $C_{qa}^s(n,m)$.  There exist correlations $\big(p_k(i,j|v,w)\big)$ in $C_q(n,m)$ for $k \geq 1$ such that
\[ \lim_k p_k(i,j|v,w) = p(i,j|v,w) \quad \forall \,\, i, j, v, w. \]
Each $\big(p_k(i, j, v, w) \big)$ has a representation on a tensor product of finite dimensional vector spaces  $\bb C^{d_k} \otimes \bb C^{r_k}$ as
\[ p_k(i,j|v,w) = \langle E^k_{v,i} \otimes F^k_{w,j}  \psi_k , \psi_k \rangle,\]
where the matrices $E^k_{v,i}, F^k_{w,j}$ are all orthogonal projections satisfying $\sum_i E^k_{v,i} = I_{d_k}$ and $\sum_j F^k_{w,j} = I_{r_k}$ and each $\psi_k$ is a unit vector.

Thus there is a representation $\pi_k: C^*(\bb F(n,m)) \otimes C^*(\bb F(n,m))^{op} \rightarrow \bb M_{d_k} \otimes \bb M_{r_k}$ with
$\pi_k(e_{v,i} \otimes e_{w,j}^{op}) = E^k_{v,i} \otimes F^k_{w,j}$.  Setting $\phi_k(a \otimes b^{op}) = \langle \pi_k(a \otimes b^{op}) \psi_k, \psi_k \rangle$ defines a sequence of states $\phi_k$ on $C^*(\bb F(n,m) \otimes C^*(\bb F(n,m))^{op}$.  Let $\phi$ be any weak*-limit point of $(\phi_k)_k$ and note that $\phi(e_{v,i} \otimes e_{w,j}^{op})= p(i,j|v,w)$.

If we let  $\pi: C^*(\bb F(n,m)) \otimes_{min} C^*(\bb F(n,m))^{op} \rightarrow B(\cal H)$ and $\psi \in H$ be a GNS representation of this state, then it follows by \cite[Theorem~5.5]{estChromNo}, that $\tau(a) = \langle \pi(a \otimes 1)\psi, \psi \rangle$ is a trace and that
\[\pi(a \otimes e_{w,j})\psi= \pi(ae_{w,j} \otimes 1)\psi.\]
Hence,  $\pi(a \otimes b e_{w,j})\psi = \pi(1 \otimes b)\pi(a \otimes e_{w,j})\psi = \pi(ae_{w,j} \otimes b) \psi$ and it follows that
\[ \phi(a \otimes b^{op}) = \langle \pi(a \otimes b^{op})\psi,\psi \rangle = \langle \pi(ab \otimes 1) \psi, \psi \rangle = \tau(ab).\]
Thus, $\tau$ is an amenable trace by Theorem \ref{amenableTrace}.

To see (3) implies (2), it suffices to show that if $\tau$ is an amenable trace on $\mathrm{C}^*(\mathbb{F}(n, m))$, then there is a sequence of traces $\tau_k$ on $\mathrm{C}^*(\mathbb{F}(n, m))$ which factor through a finite dimensional matrix algebra such that $\tau_k(a) \rightarrow \tau(a)$ for all $a \in \mathrm{C}^*(\mathbb{F}(n, m))$.  Since $\tau$ is amenable, Theorem \ref{amenableTrace} yields a sequence of completely positive, unital maps $\varphi_k : \mathrm{C}^*(\mathbb{F}(n, m)) \rightarrow \mathbb{M}_{d(k)}$ such that
\[ \| \varphi_k(ab) - \varphi_k(a)\varphi_k(b) \|_2 \rightarrow 0 \quad \text{and} \quad \operatorname{tr}(\varphi_k(a)) \rightarrow \tau(a) \]
for all $a, b \in \mathrm{C}^*(\mathbb{F}(n, m))$.  By passing to a subsequence and applying Lemma \ref{stabilityOfCorrelations}, we may find projections $p_{v, i}^k \in \mathbb{M}_{d(k)}$ such that $\sum_i p_{v, i}^k = 1$ for all $v$ and $k$ and such that $\| \varphi_k(e_{v, i}) - p_{v, i}^k \|_2 \rightarrow 0$ for all $v$ and $i$.  There is a *-homomorphism $\varphi'_k : \mathrm{C}^*(\mathbb{F}(n, m)) \rightarrow \mathbb{M}_{d(k)}$ such that $\varphi'_k(e_{v, i}) = p_{v, i}^k$ for all $v$, $i$, and $k$.  Using that $\mathrm{C}^*(\mathbb{F}(n, m))$ is generated as a C$^*$-algebra by the projections $e_{v, i}$, one can show
\[ \| \varphi_k(a) - \varphi'_k(a) \|_2 \rightarrow 0 \]
for all $a \in \mathrm{C}^*(\mathbb{F}(n, m))$.  In particular,
\[ \lim \operatorname{tr}(\varphi'_k(a)) = \lim \operatorname{tr}(\varphi_k(a)) = \tau(a) \]
for all $a \in \mathrm{C}^*(\mathbb{F}(n, m))$.

To see (3) implies (4), note that if $\tau$ is an amenable trace on $\mathrm{C}^*(\mathbb{F}(n, m))$, then there is a trace preserving *-homomorphism $\varphi : \mathrm{C}^*(\mathbb{F}(n, m)) \rightarrow \mathcal{R}^\omega$ by Theorem \ref{amenableTrace}.  Define $f_{v, i} = \varphi(e_{v, i}) \in \cal R^\omega$ for all $v$ and $i$.  Conversely, given $f_{v, i}$ as in (4), there is a *-homomorphism $\varphi : \mathrm{C}^*(\mathbb{F}(n, m)) \rightarrow \cal R^\omega$ such that $\varphi(e_{v, i}) = f_{v, i}$ for all $v$ and $i$.  As $\mathrm{C}^*(\mathbb{F}(n, m))$ has the local lifting property, $\varphi$ has a completely positive, unital lift $\mathrm{C}^*(\mathbb{F}(n, m)) \rightarrow \ell^\infty(\cal R)$.  By Theorem \ref{amenableTrace}, the trace $\tau := \tr \circ \varphi$ on $\mathrm{C}^*(\mathbb{F}(n, m))$ is amenable.
\end{proof}

\begin{cor}
Let $\cal G=(I,O,V)$ be a synchronous game.  Then the following are equivalent:
\begin{itemize}
\item[(i)] $\cal G$ has a perfect qa-strategy,
\item[(ii)] there is a unital *-representation of $\cal A(\cal G)$ into $\cal R^\omega$,
\item[(iii)] there is an amenable trace $\tau$ on $\mathrm{C}^*(\mathbb{F}(n, m))$ such that
  \[ V(v,w,i,j)=0 \implies \tau(e_{v, i} e_{w, j}) = 0 \quad \forall \, \, i, j, v, w. \]
  \end{itemize}
\end{cor}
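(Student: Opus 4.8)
The plan is to deduce the corollary from Theorem~\ref{syncapproxthm} by translating the game-theoretic data of $\cal G$ into synchronous correlations and into the generators of $\cal A(\cal G)$. Write $n=|I|$ and $m=|O|$, so that $C_t^s(n,m)$, $\mathrm{C}^*(\bb F(n, m))$, and the spectral projections $e_{v,i}$ are the relevant objects, and recall that $\cal A(\cal G)$ is the universal unital $*$-algebra on projections $E_{v,i}$ with $\sum_i E_{v,i}=1$ for each $v$ and $E_{v,i}E_{w,j}=0$ whenever $V(v,w,i,j)=0$.

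For (i)$\iff$(iii): by definition $\cal G$ has a perfect $qa$-strategy exactly when some $\big(p(i,j|v,w)\big)\in C_{qa}(n,m)$ satisfies $V(v,w,i,j)=0\Rightarrow p(i,j|v,w)=0$, and since $\cal G$ is synchronous any such correlation is automatically synchronous, so this is equivalent to the existence of $\big(p(i,j|v,w)\big)\in C_{qa}^s(n,m)$ with that property. Now apply the equivalence (1)$\Leftrightarrow$(3) of Theorem~\ref{syncapproxthm}: such a correlation exists iff there is an amenable trace $\tau$ on $\mathrm{C}^*(\bb F(n, m))$ with $\tau(e_{v,i}e_{w,j})=p(i,j|v,w)$ for all $i,j,v,w$, and under this identification the condition that $p(i,j|v,w)=0$ whenever $V(v,w,i,j)=0$ is precisely the condition that $\tau(e_{v,i}e_{w,j})=0$ whenever $V(v,w,i,j)=0$. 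Reading this off in both directions — starting from a perfect correlation, or conversely setting $p(i,j|v,w):=\tau(e_{v,i}e_{w,j})$ — establishes (i)$\iff$(iii).

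For (iii)$\Rightarrow$(ii): given $\tau$ as in (iii), the implication (3)$\Rightarrow$(4) of Theorem~\ref{syncapproxthm} yields projections $f_{v,i}\in\cal R^\omega$ with $\sum_i f_{v,i}=1$ and $\tr(f_{v,i}f_{w,j})=\tau(e_{v,i}e_{w,j})$. When $V(v,w,i,j)=0$ this trace vanishes; since $f_{v,i}f_{w,j}f_{v,i}=(f_{w,j}f_{v,i})^*(f_{w,j}f_{v,i})\ge 0$ has the same (zero) trace and the trace on $\cal R^\omega$ is faithful, $f_{w,j}f_{v,i}=0$ and hence $f_{v,i}f_{w,j}=0$. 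Thus the $f_{v,i}$ satisfy all the defining relations of $\cal A(\cal G)$, so its universal property gives a unital $*$-homomorphism $\cal A(\cal G)\to\cal R^\omega$ with $E_{v,i}\mapsto f_{v,i}$. For the converse (ii)$\Rightarrow$(iii): if $\rho:\cal A(\cal G)\to\cal R^\omega$ is a unital $*$-representation, the $f_{v,i}:=\rho(E_{v,i})$ are projections with $\sum_i f_{v,i}=1$ and, since $\rho$ carries the relation $E_{v,i}E_{w,j}=0$ along, satisfy $f_{v,i}f_{w,j}=0$ whenever $V(v,w,i,j)=0$; feeding them into (4)$\Rightarrow$(3) of Theorem~\ref{syncapproxthm} gives an amenable trace $\tau$ on $\mathrm{C}^*(\bb F(n, m))$ with $\tau(e_{v,i}e_{w,j})=\tr(f_{v,i}f_{w,j})$, which is $0$ whenever $V(v,w,i,j)=0$, as required.

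Almost all of the work is already carried out in Theorem~\ref{syncapproxthm}; the one genuinely new step is the passage from the scalar condition $\tr(f_{v,i}f_{w,j})=0$ to the algebraic relation $f_{v,i}f_{w,j}=0$, where faithfulness of the trace on $\cal R^\omega$ is used, together with the elementary matching of the generators and relations of $\cal A(\cal G)$ with the projections $e_{v,i}$. I do not expect a real obstacle here, only the bookkeeping needed to keep the hypotheses of (i), (ii), (iii) aligned and to remember that synchronicity of $\cal G$ is what lets us work inside $C_{qa}^s(n,m)$ rather than $C_{qa}(n,m)$.
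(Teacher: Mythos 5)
Your proof is correct, and since the paper states this corollary without supplying a proof, your argument is precisely the natural deduction from Theorem~\ref{syncapproxthm} that the authors leave to the reader. In particular, you correctly isolated and handled the only nontrivial step: passing from the scalar condition $\tr(f_{v,i}f_{w,j})=0$ to the algebraic relation $f_{v,i}f_{w,j}=0$ via faithfulness of the trace on $\cal R^\omega$, which is exactly what is needed to invoke the universal property of $\cal A(\cal G)$.
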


\begin{cor} The following are equivalent:
\begin{itemize}
  \item[(i)] Connes' embedding conjecture has an affirmative answer,
  \item[(ii)] $C^s_{qa}(n,m) = C^s_{qc}(n,m), \, \forall n,m$,
  \item[(iii)] $C_{qa}(n,m) = C_{qc}(n,m), \, \forall n,m$.
\end{itemize}
\end{cor}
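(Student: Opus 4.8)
The plan is to establish $(\mathrm{i}) \Rightarrow (\mathrm{ii}) \Leftrightarrow (\mathrm{iii})$ together with $(\mathrm{iii}) \Rightarrow (\mathrm{i})$, the last being the one genuinely deep input.

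\emph{$(\mathrm{i}) \Rightarrow (\mathrm{ii})$.} First, $C^s_{qa}(n,m) \subseteq C^s_{qc}(n,m)$ holds unconditionally: by Theorem \ref{syncapproxthm} an element of $C^s_{qa}(n,m)$ comes from an amenable trace on $\mathrm{C}^*(\mathbb{F}(n,m))$, which is in particular a trace, so by the trace characterization of $C^s_{qc}(n,m)$ from \cite{estChromNo} it lies in $C^s_{qc}(n,m)$. For the reverse inclusion, assume Connes' embedding conjecture. Given $p \in C^s_{qc}(n,m)$, write $p(i,j|v,w) = \tau(e_{v,i}e_{w,j})$ for a tracial state $\tau$ on $\mathrm{C}^*(\mathbb{F}(n,m))$ (again by \cite{estChromNo}). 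The GNS von Neumann algebra of $\tau$ has separable predual, hence embeds trace-preservingly into $\mathcal R^\omega$ by CEC; composing with the GNS map gives a trace-preserving $*$-homomorphism $\varphi\colon \mathrm{C}^*(\mathbb{F}(n,m)) \to \mathcal R^\omega$, and the projections $f_{v,i} := \varphi(e_{v,i})$ satisfy $\sum_i f_{v,i} = 1$ and $\tr(f_{v,i}f_{w,j}) = p(i,j|v,w)$. By Theorem \ref{syncapproxthm}, $p \in C^s_{qa}(n,m)$, so $(\mathrm{ii})$ holds.

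\emph{$(\mathrm{ii}) \Leftrightarrow (\mathrm{iii})$.} Since being synchronous is a closed linear restriction, $C^s_t(n,m) = C_t(n,m) \cap \{\text{synchronous correlations}\}$, so $(\mathrm{iii})$ gives $(\mathrm{ii})$ immediately by intersecting. For $(\mathrm{ii}) \Rightarrow (\mathrm{iii})$ I would use a synchronization construction: to a correlation $p \in C_t(n_A,n_B,m_A,m_B)$ one associates a synchronous correlation $\hat p \in C^s_t(N,M)$, where $N$ and $M$ are built from the disjoint unions of the two players' question, respectively answer, sets with the diagonal and the ``same-player'' blocks filled in compatibly, arranged so that for $t \in \{qa,qc\}$ the correlation $p$ is realizable if and only if $\hat p$ is. Granting this, $(\mathrm{ii})$ at parameters $(N,M)$ forces $\hat p \in C^s_{qc}(N,M) = C^s_{qa}(N,M)$, hence $p \in C_{qc} = C_{qa}$; the reverse inclusion $C_{qa} \subseteq C_{qc}$ is automatic. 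The point requiring care — and the main obstacle in this step — is verifying that a commuting-operator (resp.\ finite-dimensional-approximate) model for the synchronized $\hat p$ genuinely restricts to one for $p$; one does this through the trace (resp.\ amenable-trace) descriptions of \cite{estChromNo} and Theorem \ref{syncapproxthm} rather than manipulating POVMs directly, and one must check compatibility with limits so that the $qa$ case survives.

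\emph{$(\mathrm{iii}) \Rightarrow (\mathrm{i})$.} This is Ozawa's theorem \cite{Oz}, which I would cite: the equality of the sets of all $qa$- and all $qc$-correlations is equivalent to Connes' embedding conjecture, ultimately via Kirchberg's reformulation in terms of coincidence of the minimal and maximal $\mathrm{C}^*$-tensor norms on $\mathrm{C}^*(\mathbb{F}_\infty) \otimes \mathrm{C}^*(\mathbb{F}_\infty)$. The point to stress is that, granted the cycle above, Theorem \ref{syncapproxthm} together with the synchronization step of the previous paragraph re-derives Ozawa's equivalence by routing it entirely through \emph{synchronous} correlations and amenable traces — the advertised new proof. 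The genuinely hard implication in the whole package is this one (equivalently, $(\mathrm{ii}) \Rightarrow (\mathrm{i})$), and I expect the real obstacle to be precisely the passage from control of the two-point data $\tau(e_{v,i}e_{w,j})$ to control of all moments of $\tau$ — the point at which a naive reading of $(\mathrm{ii})$ is too weak, and where either Ozawa's tensor-norm machinery or a synchronization encoding performed at sufficiently large parameters $n,m$, so that the higher moments themselves become two-point data of an auxiliary trace, must be brought in.
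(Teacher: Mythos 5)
Your directions $(i)\Rightarrow(ii)$, $(iii)\Rightarrow(ii)$, and $(iii)\Rightarrow(i)$ are fine, and the trace argument for $(i)\Rightarrow(ii)$ is essentially right. But you never close the cycle: you still need $(ii)\Rightarrow(iii)$ or $(ii)\Rightarrow(i)$, and the sketch you give of the former --- a ``synchronization construction'' turning an arbitrary correlation into a synchronous one with matched qa- and qc-realizability --- is not supplied. Your closing paragraph correctly names the obstacle (passing from the two-point data $\tau(e_{v,i}e_{w,j})$ to all moments of $\tau$), and that is precisely where the work is. With only the three implications you actually prove, $(ii)$ could a priori be strictly weaker than $(i)$ and $(iii)$.

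The paper does not attempt $(ii)\Rightarrow(iii)$ at all. Instead, $(ii)\Rightarrow(i)$ comes from \cite[Theorem~3.7]{DyPa}, which reduces Connes' embedding conjecture to the statement that $C^s_{qc}(n,m)$ equals the closure of $C^s_q(n,m)$ for all $n,m$, combined with the equivalence $(1)\Leftrightarrow(2)$ of Theorem~\ref{syncapproxthm}: that equivalence is exactly the affirmative answer to the synchronous approximation problem \cite[Problem~3.8]{DyPa}, identifying this closure with $C^s_{qa}(n,m)$. The trivial $(iii)\Rightarrow(ii)$ then gives $(iii)\Rightarrow(ii)\Rightarrow(i)$, which reproves Ozawa's implication via Kirchberg's matricial-microstates reformulation of Connes' embedding conjecture rather than through the minimal/maximal tensor-norm machinery --- this is the ``new proof'' the remark following the corollary refers to. Your version, even if you completed the synchronization step, would still route $(iii)\Rightarrow(i)$ through Ozawa's original argument and so would miss that feature; and the synchronization lemma you invoke is not off the shelf for $t\in\{qa,qc\}$ --- verifying that the encoding preserves the qc- and qa-models at the level of traces and amenable traces is genuine work that your outline leaves untouched.
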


\begin{proof}  The equivalence of (i) and (ii) in Theorem~\ref{syncapproxthm} answers \cite[Problem ~3.8]{DyPa}.  In the remarks following Problem~3.8, \cite{DyPa} shows how a positive solution of the problem leads to the above result.
\end{proof}

\begin{remark} The implication $(iii)\implies (i)$ in the above corollary is due to Ozawa~\cite{Oz}. The equivalence of (i) and (ii) follows from \cite[Theorem~3.7]{DyPa} and our solution of their synchronous approximation problem. Note that the implication (iii) implies (ii) is trivial, so we have a different proof of Ozawa's implication. Ozawa's proof uses Kirchberg's results showing the equivalence of Connes' embedding conjecture to the equality of the minimal and maximal tensor products of certain C$^*$-algebras of free groups. The above proof uses the results of \cite{DyPa} which in turn used Kirchberg's results about the equivalence of Connes' embedding conjecture to finite approximability of traces, often referred as the matricial microstates conjecture.
\end{remark}

We next turn our attention to the set of synchronous quantum spatial correlations.  We prove the somewhat surprising result that any synchronous correlation that that can be obtained using a tensor product of possible infinite dimensional Hilbert spaces has a representation using only finite dimensional spaces.

\begin{thm} Let $n,m \in \bb N$.  Then $C^s_q(n,m) = C^s_{qs}(n,m)$.
\end{thm}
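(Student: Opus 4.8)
The plan is to establish the nontrivial inclusion $C^s_{qs}(n, m) \subseteq C^s_q(n, m)$; the reverse inclusion is immediate, since a finite dimensional tensor product is a special case of an arbitrary one.

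First I would unpack a qs-strategy. Suppose $\big(p(i, j | v, w)\big) \in C^s_{qs}(n, m)$ is implemented by projections $E_{v, i} \in B(\cal H_A)$, $F_{w, j} \in B(\cal H_B)$ with $\sum_i E_{v,i} = I$ and $\sum_j F_{w,j} = I$, and a unit vector $\psi \in \cal H_A \otimes \cal H_B$. Since the correlation is synchronous, for each $v$ the operator $P_v := \sum_{i \neq i'} E_{v, i} \otimes F_{v, i'}$ is a projection (its summands are mutually orthogonal projections), and $\langle P_v \psi, \psi \rangle = \sum_{i \ne i'} p(i, i' | v, v) = 0$, so $P_v \psi = 0$; as $P_v = I - \sum_i E_{v,i} \otimes F_{v,i}$, this gives $\sum_i (E_{v, i} \otimes F_{v, i})\psi = \psi$, whence $(E_{v, i} \otimes I)\psi = (E_{v, i} \otimes F_{v, i})\psi = (I \otimes F_{v, i})\psi$ for all $v, i$ — the ``trace vector'' argument of \cite{estChromNo}. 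Letting $\rho_A \in B(\cal H_A)$ be the reduced density operator of $\psi$, a positive trace-class operator with $\operatorname{Tr}(\rho_A) = 1$, the displayed identity gives $p(i, j | v, w) = \langle (E_{v, i} E_{w, j} \otimes I)\psi, \psi \rangle = \operatorname{Tr}(\rho_A E_{v, i} E_{w, j})$, and a short computation (comparing $\langle (E_{v,i} X \otimes I)\psi, \psi\rangle$ with $\langle (X E_{v,i} \otimes I)\psi, \psi\rangle$ for arbitrary $X \in B(\cal H_A)$) shows that $\rho_A$ commutes with every $E_{v, i}$. Replacing $\cal H_A$ by the support of $\rho_A$, which is invariant for all $E_{v, i}$ and on which they still restrict to PVMs, I may assume $\rho_A$ is faithful.

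The crux is then the spectral decomposition of $\rho_A$. Being compact and faithful, $\rho_A = \sum_{l \geq 1} \mu_l r_l$ where the $\mu_l > 0$ are the distinct eigenvalues and the $r_l$ are finite-rank spectral projections with $\sum_l r_l = I$. Each $r_l$ lies in the von Neumann algebra generated by $\rho_A$, hence commutes with every $E_{v, i}$, so $\cal H_l := r_l \cal H_A$ is a finite dimensional subspace invariant under all $E_{v, i}$. Writing $d_l = \dim \cal H_l$, the compressions $E^{(l)}_{v, i} := E_{v, i}|_{\cal H_l}$ are PVMs on $\bb C^{d_l}$, and by \cite{HMPS} the tuple $q^{(l)}(i, j | v, w) := \operatorname{tr}_{d_l}\!\big(E^{(l)}_{v, i} E^{(l)}_{w, j}\big)$ lies in $C^s_q(n, m)$ (equivalently: purify the normalised trace $\tfrac{1}{d_l} I_{\cal H_l}$ and have Bob use the complex-conjugate measurements). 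Since $\operatorname{Tr}(\rho_A E_{v, i} E_{w, j}) = \sum_l \mu_l \operatorname{Tr}_{\cal H_l}(E^{(l)}_{v, i} E^{(l)}_{w, j}) = \sum_l (\mu_l d_l)\, q^{(l)}(i, j | v, w)$ with $\sum_l \mu_l d_l = \operatorname{Tr}(\rho_A) = 1$, the point $\big(p(i, j | v, w)\big)$ is a (norm-convergent, countable) convex combination of elements of $C^s_q(n, m)$.

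Finally, I would invoke the elementary fact that a convex subset $C$ of a finite dimensional real vector space contains every convergent countable convex combination of its own points: restricting to the affine span $V$ of the points that occur, $C \cap V$ has nonempty relative interior in $V$, and were the value of the combination to lie on the relative boundary it would admit a supporting hyperplane in $V$, which would then contain all the summands and hence all of $V$ — absurd. Since $C^s_q(n, m) \subseteq \bb R^{n^2 m^2}$ is convex (given two synchronous q-correlations, take the direct sum of the two systems with a block-diagonal density scaled by the convex weights), this yields $\big(p(i, j | v, w)\big) \in C^s_q(n, m)$. The step I expect to require the most care is the commutation relation $\rho_A E_{v, i} = E_{v, i} \rho_A$: it is precisely what forces the finite dimensional eigenspaces of $\rho_A$ to be invariant under the measurements, and hence what collapses a possibly infinite dimensional tensor-product model into a convex combination of finite dimensional ones.
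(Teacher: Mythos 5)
Your proof is correct, and it reaches the same decomposition as the paper by a cleaner route at the key technical step. Both arguments ultimately split the qs-strategy along the eigenspaces of the reduced density operator $\rho_A$ of $\psi$ (equivalently, by grouping equal Schmidt coefficients), write $p$ as a countable convex combination $\sum_l t_l \, p_l$ of finite-dimensional synchronous q-correlations $p_l$, and finish by convexity of $C^s_q(n,m)$ --- the paper cites Cook for the fact that a countable convex combination stays inside a finite-dimensional convex set, while you supply the supporting-hyperplane argument inline, which is essentially that theorem's proof. The technical heart in both cases is showing that these eigenspaces are reducing for the $E_{v,i}$. The paper proves this by passing to order-$m$ unitaries $A_v = \sum_i \omega^i E_{v,i}$, working in the Schmidt basis, and running an induction on the decreasing sequence of distinct Schmidt coefficients, extracting invariance from a chain of inequalities of the form $|r_l|^2 \ge \sum_j |\alpha_j|^2 |\langle A_v e_j, e_i\rangle|^2 = |r_l|^2$ that force off-block matrix entries of $A_v$ to vanish. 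You instead prove the stronger and more conceptual statement that $\rho_A$ commutes with every $E_{v,i}$: this falls out in one line from the trace-vector identity $(E_{v,i}\otimes I)\psi = (I\otimes F_{v,i})\psi$, and it immediately places the spectral projections of $\rho_A$ in the commutant of the $E_{v,i}$, so the finite-rank eigenspaces are reducing with no induction needed. Your route also sidesteps the paper's normalization steps (equalizing $\dim\mathcal{H}$ and $\dim\mathcal{K}$, padding the Schmidt vectors to orthonormal bases), since restricting to the support of $\rho_A$ handles all of that automatically. What the paper's version buys is that it stays at the level of explicit vector computations in a fixed basis; what yours buys is brevity and a transparent structural explanation of why the decomposition works: $\psi$ is a trace vector for Alice's algebra, so $\rho_A$ lies in its commutant, and the finite-dimensionality then comes for free from compactness of $\rho_A$.
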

\begin{proof}  Clearly, $C_q^s(n,m) \subseteq C^s_{qs}(n, m)$, so we must prove that $C^s_{qs}(n,m) \subseteq C^s_q(n,m)$.  Let $\big( p(i,j|v,w) \big) \in C^s_{qs}(n,m)$ be represented as
\[ p(i,j|v,w) = \langle E_{v.i} \otimes F_{w,j} \psi, \psi \rangle \]
where $\{ E_{v,i}, 1 \le v \le n, 1 \le i \le m \}$ are orthogonal projections on some Hilbert space $\cal H$ satisfying $\sum_i E_{v,i} = I_{\cal H}, \forall v$, $\{ F_{w,j} : 1 \le w \le n, 1 \le j \le m \}$ are orthogonal projections on some Hilbert space $\cal K$ satisfying $\sum_j F_{w,j} = I_{\cal K}, \forall w$ and $\psi \in \cal H \otimes \cal K$ is a unit vector.

Note that if we are given any other Hilbert space $\cal G$ and we set $F_{w,1}^{\prime} = F_{w,1} \oplus I_{\cal G}$ and $F_{w,j}^{\prime} = F_{w,j} \oplus 0$, then
$p(i,j|v,w) = \langle (E_{v,i} \otimes F_{w,j}^{\prime}) \psi, \psi \rangle$.  In this manner we see that there is no loss of generality in assuming that $dim(\cal H) = dim(\cal K)$, so we assume that these two Hilbert spaces have the same dimension.

Let $\sum_{k \in K} \alpha_k e_k \otimes f_k$ be the Schmidt decomposition of $\psi$ so that $K$ is a countable set and $\{e_k: k \in K \}$ and $\{ f_k: k \in K \}$ are orthonormal sets in their respective Hilbert spaces.  By setting sufficiently many $\alpha$'s equal to 0, and direct summing with additional Hilbert spaces as needed, we may assume that these sets are orthonormal bases for their respective spaces.

Let $\{ r_l: l \in L \} = \{ \alpha_k: k \in K \}$ be an enumeration of the set of distinct non-zero $\alpha_k$'s (which is at most countable) with $r_1 \ge r_2 \ge \ldots$ and let $S_l = \{ k: \alpha_k = r_l \}.$  Let
$\cal E_l = span \{ e_k: k \in S_l \}$ and $\cal F_l = span \{ f_k: k \in S_l \}.$ Since the $\alpha_k$'s are square summable, each set $S_l$ is finite and so each of these spaces is finite dimensional.

We claim that the spaces $\cal E_l$ are reducing subspaces for $\{ E_{v,i} \}$ and that the spaces $\cal F_l$ are reducing for  the set  $\{ F_{w,j} \}$

First, we complete the proof assuming the claim. Let $E^l_{v,i}$ denote the compression of $E_{v,i}$ to the space $\cal E_l$ and let $F^l_{w,j}$ denote the compression of $F_{w,j}$ to the space $\cal F_l$ so that these are orthogonal projections and $\sum_i E^l_{v,i} = I_{\cal E_l}, \forall v$ and $\sum_j F^l_{w,j} = I_{\cal F_l}, \forall e$.  Set $d_l = dim(\cal E_l) = dim(\cal F_l) = card( S_l )$ and let $\psi_l = \frac{1}{\sqrt{d_l}} \sum_{k \in S_l} e_k \otimes f_k \in \cal E_l \otimes \cal F_l$, which is a unit vector.
Let $t_l = \frac{r_l^2}{d_l}$ so that $\sum_l t_l =1$ and set
\[ p_l(i,j|v,w) = \langle E^l_{v,i} \otimes F^l_{w,j} \psi_l, \psi_l \rangle \in C_q(n,m).\]
Note that
\[ \sum_l t_l p_l(i,j|v,w) = p(i,j|v,w),\]
so that for $i \ne j$,  $\sum_l t_l p_l(i,j|v,v) = 0$ from which it follows that $p(i,j|v,v)=0, \forall l$.  Thus, each $p_l(i,j|v,w) \in C^s_q(n,m)$.

Since $C^s_q(n,m)$ is convex, by \cite{Cook72}, $p(i,j|v,w) \in C^s_q(n,m)$. The key point here is that by \cite{Cook72} a convex set need not be closed to ensure that such a series remains in the set.

Thus, we need only establish that these spaces reduce the operators.  Let $\omega= e^{2 \pi i/m}$ be a primitive $m$-th root of unity and let $A_v= \sum_{i=1}^m \omega^i E_{v,i}$ and let $B_w= \sum_{j=1}^m \omega^j F_{w,j}$ so that these are unitaries of order $m$ and the original projections are the spectral projections of these unitaries.  Note that these unitaries generate the same C$^*$-algebras as the projections so that the projections are reduced by these subspaces if and only if these unitaries are reduced by these subspaces.

First recall that the synchronous condition guarantees that $(E_{v,i} \otimes I) \psi = (I \otimes F_{v,i}) \psi$ by \cite[Theorem~5.5i]{estChromNo}  and hence,  $(A_v \otimes I) \psi = (I \otimes B_v) \psi$.

Now compute that $(A \otimes I) \psi = (I \otimes B) \psi$ implies
\[ \alpha_j \langle Ae_j, e_i \rangle = \langle (A \otimes I) \psi, e_i \otimes f_j \rangle = \langle (I \otimes B) \psi, e_i \otimes f_j \rangle = \alpha_i \langle Bf_i, f_j \rangle .\]
Thus for $i \in S_1$, using that $\alpha_1 \ge \alpha_j$, we have
\begin{multline*}
|\alpha_1|^2 \ge  \sum_j |\alpha_j|^2 |\langle A_v e_j, e_i \rangle|^2 = \sum_j |\alpha_i|^2 |\langle B_vf_i, f_j \rangle|^2 = |\alpha_1|^2 \|B_vf_i\|^2 = |\alpha_1|^2,
\end{multline*}
and so we must have equality throughout. But equality implies that $\langle A_ve_j, e_i \rangle =0,  \forall j \notin S_1$. Hence, $A_v^*e_i \in \cal E_1, \forall i \in S_1$.  This shows that $A_v^*$ leaves $\cal E_1$ invariant. Hence,  $A_v= \big(A_v^*\big)^{m-1}$ also leaves this space invariant and so $\cal E_1$ is a reducing subspace for every $A_v$ and hence for the entire C$^*$-algebra that they generate.  A similar proof shows that $\cal F_1$ is reducing for every $B_v$.

Now it follows that for $i \in S_2$, we have that for $j \in S_1,  \langle A_ve_j, e_i \rangle =0$ and so,
\[ |r_2|^2 \ge \sum_j |\alpha_j|^2 |\langle A_v e_j, e_i \rangle|^2 = \sum_j |r_2|^2 |\langle B_vf_i, f_j \rangle|^2 = |r_2|^2, \]
and similar reasoning shows that $A_v^*e_i \in \cal E_2$ and consequently, that $\cal E_2$ reduces these unitaries.

The rest of the proof now follows by induction.
\end{proof}

\begin{cor}
A synchronous game has a perfect qs-strategy if and only if it has a perfect q-strategy.
\end{cor}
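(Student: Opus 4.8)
The plan is to deduce this immediately from the preceding theorem, which asserts the equality of \emph{tuples} $C^s_q(n,m) = C^s_{qs}(n,m)$, not merely of their closures. First I would recall the two relevant facts from the preliminaries: since $\cal G = (I, O, V)$ is synchronous, any perfect correlation for $\cal G$ is automatically synchronous; and $\cal G$ possesses a perfect $t$-strategy precisely when there is a tuple $\big(p(a,b|x,y)\big) \in C_t(|I|,|I|,|O|,|O|)$ with $V(x,y,a,b) = 0 \implies p(a,b|x,y) = 0$, which by synchronicity then lies in $C^s_t(|I|,|O|)$.

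For the forward direction, suppose $\cal G$ has a perfect qs-strategy, witnessed by a tuple $\big(p(a,b|x,y)\big) \in C^s_{qs}(|I|,|O|)$ satisfying $V(x,y,a,b) = 0 \implies p(a,b|x,y) = 0$. By the theorem, this same tuple lies in $C^s_q(|I|,|O|)$. The winning condition $V(x,y,a,b) = 0 \implies p(a,b|x,y) = 0$ is a property of the tuple alone and is insensitive to which model realizes it, so the tuple is now a perfect q-correlation; hence $\cal G$ has a perfect q-strategy. The reverse direction is the trivial inclusion $C^s_q(|I|,|O|) \subseteq C^s_{qs}(|I|,|O|)$ (every finite-dimensional tensor-product strategy is in particular a tensor-product strategy), so a perfect q-strategy is a fortiori a perfect qs-strategy.

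There is no real obstacle: the only point needing a moment's care is that the theorem produces the \emph{identical} correlation tuple rather than an approximation, which is exactly what its statement as a set equality provides; if it gave only a limiting statement the argument would break, since the winning condition is not preserved under limits. One could alternatively phrase everything through the affiliated $*$-algebra $\cal A(\cal G)$ and the characterization from \cite{HMPS}, but quoting the theorem directly is the cleanest route.
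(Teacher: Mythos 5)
Your argument is correct and is exactly the one the paper leaves implicit: a perfect qs-correlation for a synchronous game is automatically synchronous, so it lies in $C^s_{qs}(|I|,|O|) = C^s_q(|I|,|O|)$ and hence is a perfect q-correlation; the converse is the trivial inclusion. You also correctly flag the only subtle point, namely that the theorem gives equality of the correlation sets themselves rather than of their closures, which is what lets the winning condition transfer verbatim.
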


\section{Separating $C^s_{qs}$ and $C^s_{qa}$}

Suppose $Ax = b$ is an $m \times n$ linear system over $\mathbb{Z}/2$; that is, $A = (a_{i, j}) \in \mathbb{M}_{m, n}(\mathbb{Z}/2)$ and $b \in (\mathbb{Z}/2)^n$.  Let $V_i = \{ j \in \{1, \ldots, n \} : a_{i, j} \neq 0 \}$ denote the variables which occur in the $i$th equation for $i = 1, \ldots, m$.  It will be convenient to write the system multiplicative notation where we identify $\mathbb{Z}/2$ with $\{\pm 1\}$ and write the $i$th equation of the linear system as
\begin{equation}\label{eqn:MultiplicativeSolution}
\prod_{j \in V_i} x_j = (-1)^{b_i}
\end{equation}
for $i = 1, \ldots, m$ where $x_j \in \{\pm 1\}$.  We recall the definition of the solution group $\Gamma(A, b)$ associated to the system $Ax = b$.  The idea is to interpret \eqref{eqn:MultiplicativeSolution} as the relations of a group with generators $x_1, \ldots, x_n$ and a generator $J$ used to place the role of $-1$.  More precisely, we make the following definition.

\begin{defn}
Given an $m \times n$ linear system as above, let $\Gamma(A, b)$ denote the group generated by $u_1, \ldots, u_n, J$ with relations
\begin{enumerate}
  \item $u_j^2 = J^2 = 1$ for $j = 1, \ldots, n$,
  \item $u_j u_k = u_k u_j$ for $j, k \in V_i$ and $i = 1, \ldots, m$,
  \item $u_j J = J u_j$ for $j = 1, \ldots, n$, and
  \item $\prod_{j \in V_i} u_j = J^{b_i}$ for $i = 1, \ldots, m$.
\end{enumerate}
We call $\Gamma(A, b)$ the \emph{solution group} associated to the linear system $Ax = b$.
\end{defn}

For $i = 1, \ldots, m$, let
\[ S_i = \{ x \in \{\pm 1\}^n : \prod_{j \in V_i} x_j = (-1)^n \text{ and } x_j = 1 \text{ for  }j \notin V_i \}. \]
We associate a synchronous game to $Ax = b$ as follows:

\begin{defn}
Suppose $Ax = b$ is an $m \times n$ linear system over $\mathbb{Z}/2$ and $b \in (\mathbb{Z}/2)^n$.
The synchronous BCS game associated to $A x = b$, denoted $\operatorname{synBCS}(A, b)$, is given as follows:
\begin{enumerate}
  \item the input set is $\mathcal{I} = \{1,\ldots, m\}$;
  \item the output set is $\mathcal{O} = \{\pm 1 \}^n$;
  \item given input $(i,j)$, Alice and Bob win on output $(x,y)$ if $x \in S_i$, $y \in S_j$, and for all $k \in V_i \cap V_j$, $x_k = y_k$.
\end{enumerate}
\end{defn}

Let $\mathcal{A} \cong \mathrm{C}^*(\bb F(m, 2^n))$ denote the universal C$^*$-algebra generated by projections $e_{i, x}$ for $i = 1, \ldots, m$ and $x \in \{\pm 1\}^n$ subject to the relations $\sum_x e_{i, x} = 1$ for all $i = 1, \ldots, m$.  The following result gives a relationship between correlations in $C_{qc}^s(m, 2^n)$ and the structure of the group $\Gamma(A, b)$.

\begin{thm}\label{thm:surjection}
Suppose every column of $A$ contains a non-zero entry.  Then there is a surjective *-homomorphism $\pi : \mathcal{A} \rightarrow \mathrm{C}^*(\Gamma(A, b)) / \langle J + 1 \rangle,$ where $\langle J+1 \rangle$ denotes the ideal generated by $J+1$, given by
\begin{equation}\label{eqn:surjection}
  \pi(e_{i, x}) = \begin{cases} \prod_{j \in V_i} \chi_{x_j}(u_j) & x \in S_i \\ 0 & x \notin S_i, \end{cases}
\end{equation}
where $\chi_{x_j}(u_j)$ denotes the spectral projection of $u_j$ at the point $x_j$.

Moreover, the map $\tau \mapsto \tau \circ \pi$ is a bijection from the set of tracial states on $\mathrm{C}^*(\Gamma(A, b)) / \langle J + 1 \rangle$ to the set of tracial states $\tau'$ on $\mathcal{A}$ satisfying $\tau'(e_{i, x}e_{j, y}) = 0$ whenever Alice and Bob lose on outputs $(x, y)$ given inputs $(i, j)$.
\end{thm}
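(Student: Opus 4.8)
The plan is to define $\pi$ from the universal property of $\cal A$, check surjectivity by hand, and then establish the claimed bijection of traces, whose only substantial half is the construction of a trace on $\mathrm C^*(\Gamma(A,b))/\langle J+1\rangle$ out of one on $\cal A$; throughout, the bridge between the two presentations is the spectral-projection dictionary $e_{i,x}\leftrightarrow\prod_{j\in V_i}\chi_{x_j}(u_j)$ and its inverse $\chi_{\pm1}(u_j)\leftrightarrow\sum_{x\in S_i,\,x_j=\pm1}e_{i,x}$. Since $\cal A$ is the universal C$^*$-algebra generated by projections $e_{i,x}$ with $\sum_x e_{i,x}=1$ for each $i$, defining $\pi$ amounts to checking that the prescribed images are projections summing to $1$ in $\mathrm C^*(\Gamma(A,b))/\langle J+1\rangle$. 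For $j\in V_i$ the unitaries $u_j$ commute by relation (2) of $\Gamma(A,b)$, hence the spectral projections $\chi_{\pm1}(u_j)=\tfrac12(1\pm u_j)$ commute and $\prod_{j\in V_i}\chi_{x_j}(u_j)$ is a projection; expanding $1=\prod_{j\in V_i}\big(\chi_{+1}(u_j)+\chi_{-1}(u_j)\big)$ into $2^{|V_i|}$ mutually orthogonal projections and using that each of them, say $p=\prod_{j\in V_i}\chi_{\varepsilon_j}(u_j)$, satisfies $\big(\prod_{j\in V_i}u_j\big)p=\big(\prod_j\varepsilon_j\big)p$ while $\prod_{j\in V_i}u_j=J^{b_i}=(-1)^{b_i}$ in the quotient, one sees $p=0$ unless $\prod_j\varepsilon_j=(-1)^{b_i}$, i.e. unless the corresponding $x$ lies in $S_i$; hence $\sum_x\pi(e_{i,x})=1$. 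For surjectivity, fix $j$ and (using that no column of $A$ vanishes) an $i$ with $j\in V_i$; the same manipulation gives $\sum_{x\in S_i,\,x_j=\pm1}\pi(e_{i,x})=\chi_{\pm1}(u_j)$, so $u_j=\chi_{+1}(u_j)-\chi_{-1}(u_j)$ and $J=-\pi(1)$ lie in the range of $\pi$, and these generate the quotient.

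Now consider $\tau\mapsto\tau\circ\pi$. Surjectivity of $\pi$ makes it injective, and $\tau\circ\pi$ is clearly tracial on $\cal A$; I must check it kills $e_{i,x}e_{j,y}$ for every losing configuration. If $x\notin S_i$ or $y\notin S_j$, one of $\pi(e_{i,x})$, $\pi(e_{j,y})$ is already $0$. Otherwise, losing forces some $k\in V_i\cap V_j$ with $x_k\ne y_k$, and then $\pi(e_{i,x})\le\chi_{x_k}(u_k)$ and $\pi(e_{j,y})\le\chi_{y_k}(u_k)$ are dominated by orthogonal spectral projections of $u_k$, so $\pi(e_{i,x})\pi(e_{j,y})=0$. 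Thus $\tau\circ\pi$ lies in the target set.

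The reverse direction is the crux. Let $\tau'$ be a tracial state on $\cal A$ with $\tau'(e_{i,x}e_{j,y})=0$ for every losing configuration, and let $(\pi',\cal H,\xi)$ be its GNS triple; write $e_{i,x}$ also for $\pi'(e_{i,x})$. The losing configurations $(i,i,x,x)$ with $x\notin S_i$ give $\tau'(e_{i,x})=0$, so $e_{i,x}=0$ there (a trace kills projections of trace zero), and since $\tau'\big((e_{i,x}e_{j,y})^*(e_{i,x}e_{j,y})\big)=\tau'(e_{j,y}e_{i,x}e_{j,y})=\tau'(e_{i,x}e_{j,y})=0$, the relation $\tau'(e_{i,x}e_{j,y})=0$ upgrades to $e_{i,x}e_{j,y}=0$ in $\pi'(\cal A)$ for every losing configuration. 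For each $j$, fix $i$ with $j\in V_i$ and set $u_j=\sum_{x\in S_i,\,x_j=1}e_{i,x}-\sum_{x\in S_i,\,x_j=-1}e_{i,x}$; since the $e_{i,x}$ ($x\in S_i$) are mutually orthogonal projections summing to $1$, this is a self-adjoint unitary with $u_j^2=1$. The delicate point is that $u_j$ does not depend on $i$: if $j\in V_{i'}$ as well, then $x\in S_i$, $y\in S_{i'}$, $x_j\ne y_j$ is a losing configuration so $e_{i,x}e_{i',y}=0$, and comparing the two candidate $\pm1$-projections against $1=\sum_{x\in S_i}e_{i,x}=\sum_{y\in S_{i'}}e_{i',y}$ forces them to agree. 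With that in place, for $j,k\in V_i$ the unitaries $u_j,u_k$ are polynomials in the orthogonal family $\{e_{i,x}:x\in S_i\}$, so they commute, and $\prod_{j\in V_i}u_j=\sum_{x\in S_i}\big(\prod_{j\in V_i}x_j\big)e_{i,x}=(-1)^{b_i}\sum_{x\in S_i}e_{i,x}=(-1)^{b_i}$; hence $u_1,\dots,u_n$ together with $J:=-1$ satisfy the defining relations of $\Gamma(A,b)$ (the relations $u_j^2=J^2=1$ and $u_jJ=Ju_j$ being automatic), and since $(-1)+1=0$ the universal property of $\mathrm C^*(\Gamma(A,b))$ yields a unital $*$-homomorphism $\bar\rho:\mathrm C^*(\Gamma(A,b))/\langle J+1\rangle\to\pi'(\cal A)$ with $\bar\rho(u_j)=u_j$ and $\bar\rho(J)=-1$. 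Since $\chi_{x_j}(\bar\rho(u_j))=\sum_{z\in S_i,\,z_j=x_j}e_{i,z}$ and multiplying these over $j\in V_i$ collapses — a point of $S_i$ being determined by its restriction to $V_i$ — to $e_{i,x}$ for $x\in S_i$ and to $0$ otherwise, the $*$-homomorphisms $\bar\rho\circ\pi$ and $\pi'$ agree on the generators, hence $\bar\rho\circ\pi=\pi'$. Finally $\langle\,\cdot\,\xi,\xi\rangle$ restricted to $\pi'(\cal A)$ is $\tau'$ transported along $\pi'$ and so tracial, whence $\tau:=\langle\bar\rho(\,\cdot\,)\xi,\xi\rangle$ is a tracial state on the quotient with $\tau\circ\pi=\langle\pi'(\,\cdot\,)\xi,\xi\rangle=\tau'$, completing surjectivity of $\tau\mapsto\tau\circ\pi$.

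I expect the well-definedness of the unitaries $u_j$ in the reverse direction to be the one genuine obstacle: it is precisely here that the clause ``$x_k=y_k$ for all $k\in V_i\cap V_j$'' of $\operatorname{synBCS}(A,b)$ — rather than mere membership of $x,y$ in $S_i,S_j$ — is used, and everything downstream (matching the relations of $\Gamma(A,b)$, verifying $\bar\rho\circ\pi=\pi'$ and $\tau\circ\pi=\tau'$) is bookkeeping with mutually orthogonal projections; the only other places the hypotheses are used are the identity $\prod_{j\in V_i}u_j=J^{b_i}$, invoked repeatedly, and the no-zero-column assumption, needed for the surjectivity of $\pi$.
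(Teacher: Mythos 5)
Your proof is correct and takes essentially the same route as the paper's: you pass to the GNS representation of $\tau'$ where the paper works directly in the quotient $\mathcal A/\mathcal N$ by the trace-null ideal $\mathcal N=\{a:\tau'(a^*a)=0\}$ (the same object up to completion), and you establish well-definedness of the unitaries $u_j$ by an orthogonality argument on the projections $\chi_{\pm1}(u_j)$ rather than the paper's direct computation of $\tau'\bigl((\sum_x x_ke_{i,x}-\sum_y y_ke_{j,y})^2\bigr)=0$, but the substance—the spectral-projection dictionary $e_{i,x}\leftrightarrow\prod_{j\in V_i}\chi_{x_j}(u_j)$, upgrading $\tau'(e_{i,x}e_{j,y})=0$ to $e_{i,x}e_{j,y}=0$ via the trace, and building the representation of $\Gamma(A,b)$ with $J\mapsto-1$—is identical. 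One small imprecision worth noting: for $x\notin S_i$ with $\prod_{j\in V_i}x_j=(-1)^{b_i}$ the product $\prod_{j\in V_i}\chi_{x_j}(\bar\rho(u_j))$ is not $0$ but rather $e_{i,\tilde x}$ for the unique $\tilde x\in S_i$ agreeing with $x$ on $V_i$; this is harmless since $\bar\rho\circ\pi(e_{i,x})=\bar\rho(0)=0=\pi'(e_{i,x})$ for such $x$ by the definition of $\pi$, not by the collapse computation.
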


\begin{proof}
First we show that the formula for $\pi$ given in \eqref{eqn:surjection} defines a *-homomorphism on $\mathcal{A}$.  Note that since $\{u_j : j \in V_i\}$ is a set of commuting self-adjoint unitaries, $\pi(e_{i, x})$ is defined and is a projection for each $i$ and $x$.  Moreover, for $i = 1, \ldots, m$, in the algebra $\mathrm{C}^*(\Gamma(A, b)) / \langle J + 1 \rangle$,
\[ (-1)^{b_i} = \prod_{j \in V_i} u_j = \prod_{j \in V_i} (\chi_{+1}(u_j) - \chi_{-1}(u_j)) = \sum_{x \in \{\pm 1\}^{V_i}} \prod_{j \in V_i} x_j \chi_{x_j}(u_j). \]
Moreover, note that if $x \in \{\pm 1\}^{V_i}$ and $\prod_{j \in V_i} x_j \neq (-1)^{b_i}$, then
\[ \prod_{j \in V_i} x_j \chi_{x_j}(u_j) = - \prod_{j \in V_i} u_j \chi_{x_j}(u_j) = - \prod_{j \in V_i} x_j \chi_{x_j}(u_j) \]
and hence $\prod_{j \in V_i} x_j \chi_{x_j}(u_j) = 0$.  Combining these calculations, we have
\[ (-1)^{b_i} = \sum_{x \in S_i} \prod_{j \in V_i} x_j \chi_{x_j}(u_j) \]
and hence
\[ \sum_{x \in \{\pm 1\}^n} \pi(e_{i, x}) = \sum_{x \in S_i} \prod_{j \in V_i} \chi_{x_j}(u_j) = (-1)^{b_i} \sum_{x \in S_i} \prod_{j \in V_i} x_j \chi_{x_j}(u_j) = 1. \]
Thus the desired *-homomorphism $\pi$ exists.

To see $\pi$ is surjective, fix $k \in \{1, \ldots, m\}$.  As the $k$th column of $A$ contains a non-zero entry, there is an $i \in \{1, \ldots, m\}$ such that $k \in V_i$.  Note that
\begin{align*}
 u_k &= (\chi_{+1}(u_k) - \chi_{-1}(u_k)) \sum_{x \in S_i} \prod_{j \in V_i} \chi_{x_j}(u_j) \\
       &= \sum_{x \in S_i, x_k = 1} \prod_{j \in V_i} \chi_{x_j}(u_j) - \sum_{x \in S_i, x_k = -1} \prod_{j \in V_i} \chi_{x_j}(u_j) \\
       &= \sum_{x \in S_i, x_k = 1} \pi(e_{v, x}) - \sum_{x \in S_i, x_k = -1} \pi(e_{v, x}).
\end{align*}
As $\mathrm{C}^*(\Gamma(A, b)) / \langle J + 1 \rangle$ is generated by $u_1, \ldots, u_m$, the result follows.

We next work to prove the claim about traces.  As $\pi$ is surjective, the induced map on traces is injective.  To see surjectivity, let $\tau'$ be a trace on $\mathcal{A}$ such that $\tau'(e_{i, x} e_{j, y}) = 0$ if $x \notin S_i$, $y \notin S_j$, or there is a $k \in V_i \cap V_j$ such that $x_k \neq y_k$.  Define
\[ \mathcal{N} = \{ a \in \mathcal{A} : \tau'(a^*a) = 0 \} \]
and note that $\mathcal{N}$ is an ideal in $\mathcal{A}$.  We first show
\begin{enumerate}
  \item if $x \notin S_i$, then $e_{i, x} \in \mathcal{N}$,
  \item if $x_k \neq y_k$ for some $k \in V_i \cap V_j$, then $e_{i, x}e_{f, j} \notin \mathcal{N}$, and
  \item if $k \in V_i \cap V_j$, then $\displaystyle \sum_{x \in S_i} x_k e_{i, x} - \sum_{y \in S_j} y_k e_{i, x} \in \mathcal{N}$.
\end{enumerate}
First, if $x \notin S_i$, then $\tau'(e_{i, x}^*e_{i, x}) = \tau'(e_{i, x}e_{i, x}) = 0$ by the assumptions on $\tau'$.  Also, if $x_k \neq y_k$ for some $k \in V_i \cap V_j$, then
\[ \tau'((e_{i, x}e_{j, y})^*(e_{i, x}e_{j, y})) = \tau'(e_{j, y} e_{i, x} e_{j, y}) = \tau'(e_{i, x} e_{j, y}) = 0 \]
by the assumptions on $\tau'$.  For the final claim, fix $k \in V_i \cap V_j$.  Then
\[ \tau'(x_k y_k e_{i, x} e_{j, y}) = \begin{cases} \tau'(e_{i, x} e_{j, y}) & x_k = y_k \\ 0 & x_k \neq y_k \end{cases} \]
by (2) above.  Also,
\[ \sum_{x \in S_i} \tau'(e_{i, x}) = \sum_{x \in S_j} \tau'(e_{j, y}) = 1 \]
by (1) above.  Now,
\begin{align*}
 \tau' \left( \left(\sum_{x \in S_i} x_k e_{i, x} - \sum_{y \in S_j} y_k e_{i, x} \right)^* \left(\sum_{x \in S_i} x_k e_{i, x} - \sum_{y \in S_j} y_k e_{i, x}  \right) \right) \\
  = \sum_{x \in S_i} \tau'(e_{i, x}) + \sum_{y \in S_j} \tau'(e_{j, y}) - 2\sum_{x \in S_i, y \in S_j} \tau'(e_{i, x} e_{j, y}) = 0
\end{align*}
which proves (3).

Fix $k \in \{1, \ldots, n \}$.  Since the $j$th column of $A$ is non-zero, there is an $i \in \{1, \ldots, m \}$ such that $k \in V_i$.  Define $v_k \in \mathcal{A}/\mathcal{N}$ by
\[ v_k = \sum_{x \in S_i} x_k e_{i, x}. \]
by condition (3) above, the $v_k$ is independent of the choice of $i$.  Note that $v_k$ is a self-adjoint unitary in $\mathcal{A}/\mathcal{N}$ and if $k, \ell \in V_i$ for some $i = 1 \ldots m$, then $v_k v_\ell = v_\ell v_k$.  Finally for $i = 1, \ldots, m$, since the projections $e_{i, x}$ are orthogonal, we have
\[ \prod_{k \in V_i} v_k = \prod_{k \in V_i} \sum_{x \in S_i} x_k e_{i, x} = \sum_{x \in S_i} \left(\prod_{k \in V_i} x_k \right) e_{i, x} = (-1)^{b_i}. \]
It follows that there is a group homomorphism $\rho : \Gamma(A, b) \rightarrow U(\mathcal{A} / \mathcal{N})$ given by $\rho(u_k) = v_k$ and $\rho(J) = - 1$.  Now, $\rho$ induces a *-homomorphism, still denoted $\rho$, from $\mathrm{C}^*(\Gamma(A, b)) / \langle J + 1 \rangle$ to $\mathcal{A}/\mathcal{N}$.

Let $q : \mathcal{A} \rightarrow \mathcal{A}/\mathcal{N}$ denote the quotient map.  Since $\tau'$ vanishes on $\mathcal{N}$ by (1) and (2) above, there is a trace $\bar{\tau}'$ on $\mathcal{A} / \mathcal{N}$ such that $\bar{\tau}' \circ q = \tau'$.  Define a trace $\tau$ on $\mathrm{C}^*(\Gamma(A, b)) / \langle J + 1 \rangle$ by $\tau = \bar{\tau}' \circ \rho$.  By construction, $\rho(\pi(e_{i, x})) = q(e_{i, x})$ for all $i$ and $x$ and hence $\rho \circ \pi = q$.  Now, $\tau \circ \pi = \bar{\tau}' \circ \rho \circ \pi = \bar{\tau}' \circ q = \tau'$.  This completes the proof.
\end{proof}

\begin{cor}\label{cor:SynStratsAndReps}
Let $Ax = b$ be a linear system.
\begin{enumerate}
  \item $\operatorname{synBCS}(A, b)$ has a perfect qc-strategy if and only if $J \neq 1$ in $\Gamma(A, b)$,
  \item $\operatorname{synBCS}(A, b)$ has a perfect qa-strategy if and only if there is representation $\Gamma(A, b) \rightarrow \mathcal{R}^\omega$ such that $\rho(J) \neq 1$, and
  \item $\operatorname{synBCS}(A, b)$ has a perfect q-strategy if and only if there is a finite dimensional representation $\rho : \Gamma(A, b) \rightarrow U(\bb M_d)$ such that $\rho(J) \neq 1$.
\end{enumerate}
\end{cor}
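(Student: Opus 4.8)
The plan is to identify the game $*$-algebra $\cal A(\operatorname{synBCS}(A, b))$ with the quotient $\cal B := \mathrm{C}^*(\Gamma(A, b)) / \langle J + 1 \rangle$ appearing in Theorem \ref{thm:surjection}, and then to read off each of the three equivalences from the corresponding representation-theoretic description of perfect strategies (for q and qc from the characterizations recalled in Section 2, see \cite{HMPS}; for qa from the corollary of Theorem \ref{syncapproxthm} characterizing perfect qa-strategies by unital $*$-representations into $\cal R^\omega$). The first step is to upgrade Theorem \ref{thm:surjection} to an isomorphism. On the one hand, the homomorphism $\pi$ there annihilates every losing product $e_{i, x} e_{j, y}$: if $x \notin S_i$ then $\pi(e_{i, x}) = 0$, and if $x \in S_i$, $y \in S_j$ and $x_k \neq y_k$ for some $k \in V_i \cap V_j$, then using relation (2) of $\Gamma(A, b)$ one slides $\chi_{x_k}(u_k)$ and $\chi_{y_k}(u_k)$ next to each other inside $\pi(e_{i,x})\pi(e_{j,y})$, and $\chi_{x_k}(u_k)\chi_{y_k}(u_k) = \tfrac14(1 - u_k^2) = 0$. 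Hence $\pi$ descends to a surjection $\bar\pi : \cal A(\operatorname{synBCS}(A, b)) \to \cal B$. On the other hand, inside $\cal A(\operatorname{synBCS}(A, b))$ one has $e_{i, x} = 0$ for $x \notin S_i$ (the output $(x,x)$ on input $(i,i)$ is losing), so $\sum_{x \in S_i} e_{i, x} = 1$, and then --- following the argument in the proof of Theorem \ref{thm:surjection}, but now using the defining relations of $\cal A(\operatorname{synBCS}(A, b))$ as honest equalities rather than congruences modulo a trace ideal --- the elements $v_k := \sum_{x \in S_i} x_k e_{i, x}$ are independent of the choice of $i$ with $k \in V_i$, are commuting self-adjoint unitaries within each $V_i$, and satisfy $\prod_{k \in V_i} v_k = (-1)^{b_i}$. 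These yield a $*$-homomorphism $\cal B \to \cal A(\operatorname{synBCS}(A, b))$ which a direct check on generators shows is inverse to $\bar\pi$; so $\cal A(\operatorname{synBCS}(A, b)) \cong \cal B$.

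Granting this, part (1) is quick. By \cite{HMPS}, $\operatorname{synBCS}(A, b)$ has a perfect qc-strategy iff $\cal A(\operatorname{synBCS}(A, b)) \cong \cal B$ admits a unital $*$-representation into a $\mathrm{C}^*$-algebra carrying a tracial state, i.e.\ iff $\cal B$ itself has a tracial state. Since $J$ is central with $J^2 = 1$, the element $z := \tfrac12(1 - J)$ is a central projection of $\mathrm{C}^*(\Gamma(A, b))$ with $\cal B = z\,\mathrm{C}^*(\Gamma(A, b))$; thus $\cal B = 0$ iff $z = 0$ iff $J = 1$ in $\mathrm{C}^*(\Gamma(A, b))$ iff $J = 1$ in $\Gamma(A, b)$ (group elements are linearly independent in the group algebra). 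When $J \neq 1$, the canonical trace $\tau_{\mathrm c}$ on $\mathrm{C}^*(\Gamma(A, b))$ has $\tau_{\mathrm c}(z) = \tfrac12(1 - \tau_{\mathrm c}(J)) = \tfrac12 \neq 0$, so $a \mapsto 2\tau_{\mathrm c}(a)$ restricts to a tracial state on $\cal B$; when $J = 1$ the algebra $\cal B$ is zero and has none.

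For parts (2) and (3), the key point is that a unital $*$-representation of $\cal B \cong \cal A(\operatorname{synBCS}(A, b))$ into a unital $\mathrm{C}^*$-algebra $\cal C$ is precisely the same datum as a unitary representation $\rho : \Gamma(A, b) \to U(\cal C)$ with $\rho(J) = -1$. For (3), take $\cal C = \bb M_d$ and use \cite{HMPS}; it remains to see that such a $\rho$ with $d \geq 1$ exists iff there is a finite-dimensional $\rho : \Gamma(A, b) \to U(\bb M_d)$ with merely $\rho(J) \neq 1$. One direction is immediate, and conversely the $(-1)$-eigenspace of the self-adjoint unitary $\rho(J)$ is nonzero, finite-dimensional, and $\rho(\Gamma(A, b))$-invariant because $J$ is central, so restricting $\rho$ to it gives a representation sending $J$ to $-1$. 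For (2), take $\cal C = \cal R^\omega$; it remains to pass from $\rho(J) = -1$ to $\rho(J) \neq 1$. Given $\rho : \Gamma(A, b) \to U(\cal R^\omega)$ with $\rho(J) \neq 1$, the projection $q := \tfrac12(1 - \rho(J))$ is nonzero and commutes with $\rho(\Gamma(A, b))$, so compression by $q$ produces a representation $\Gamma(A, b) \to U(q\cal R^\omega q)$ sending $J$ to $-q$; since $q\cal R^\omega q \cong \cal R^\omega$ (a corner of the $\mathrm{II}_1$-factor $\cal R^\omega$), this yields the required representation into $\cal R^\omega$.

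The main obstacle is really just the first step --- establishing $\cal A(\operatorname{synBCS}(A, b)) \cong \mathrm{C}^*(\Gamma(A, b)) / \langle J + 1 \rangle$, and in particular the well-definedness of the $v_k$ --- but this is essentially the computation already carried out inside the proof of Theorem \ref{thm:surjection}, with the trace ideal $\cal N$ there replaced by $0$ and $\cal A/\cal N$ replaced by $\cal A(\operatorname{synBCS}(A, b))$ itself. The remaining ingredients --- linear independence of group elements in a group algebra, the canonical trace on a group $\mathrm{C}^*$-algebra, and the isomorphism of a corner of $\cal R^\omega$ with $\cal R^\omega$ --- are all standard.
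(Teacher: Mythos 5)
Your proof is correct, and it departs from the paper's proof in one substantive organizational way. The paper works entirely at the level of traces: it uses the bijection from Theorem~\ref{thm:surjection} between traces on $\mathrm{C}^*(\Gamma(A,b))/\langle J+1\rangle$ and perfect-strategy traces on $\mathcal A$, and in that proof the well-definedness of the unitaries $v_k$ is established only modulo the trace ideal $\cal N=\{a:\tau'(a^*a)=0\}$ via a Cauchy--Schwarz computation; the corollary is then deduced by three parallel trace arguments. You instead upgrade the surjection $\pi$ to an identification of (the enveloping C$^*$-algebra of) the game algebra $\cal A(\operatorname{synBCS}(A,b))$ with $\mathrm{C}^*(\Gamma(A,b))/\langle J+1\rangle$, the key new observation being that the well-definedness of $v_k$ is a purely algebraic consequence of the game relations: from $\sum_y e_{j,y}=1$ and the vanishing of inconsistent products one gets $v_k^{(i)}=\sum_{x_k=y_k}x_k\,e_{i,x}e_{j,y}=\sum_{x_k=y_k}y_k\,e_{i,x}e_{j,y}=v_k^{(j)}$, with no trace in sight. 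After that, all three equivalences follow uniformly from the representation-theoretic characterizations of perfect $t$-strategies recalled in Section~2 and Corollary~3.7, plus the same spectral-projection compression the paper uses in parts (2) and (3). This buys a cleaner packaging in which the corollary is an immediate consequence of one structural identification, rather than three similar trace arguments. Two caveats to make explicit: first, as the paper does, you should state that one may assume no column of $A$ is zero --- otherwise $\Gamma(A,b)$ has free order-two generators invisible to the game and the claimed identification fails (the corollary itself survives, since such generators do not affect $J$); second, $\cal A(\operatorname{synBCS}(A,b))$ is a $*$-algebra rather than a C$^*$-algebra, so the ``isomorphism'' is most precisely phrased as a natural bijection between unital $*$-representations of $\cal A(\operatorname{synBCS}(A,b))$ into C$^*$-algebras and unitary representations of $\Gamma(A,b)$ sending $J\mapsto -1$, which is exactly what your two constructions and the check that they are mutually inverse establish, and is all that the HMPS-type results require.
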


\begin{proof}
We may assume no column of $A$ is identically zero.  Assume $A$ is an $m \times n$ linear system.

We first prove (1).  If $\operatorname{synBCS}(A, b)$ has a perfect qc-strategy $p(x, y|i, j) \in C_{qc}^s(m, 2^n)$, there is a trace $\tau$ on $\cal A$ such that
\[ p(x, y|i, j) = \tau(e_{i,x} e_{j, y}) \quad \text{ for all } i, j, x, y. \]
By Theorem \ref{thm:surjection}, there is a trace $\tau'$ on $\mathrm{C}^*(\Gamma(A, b)) / \langle J + 1 \rangle$ such that $\tau' \circ \pi = \tau$.  In particular, $\mathrm{C}^*(\Gamma(A, b)) / \langle J + 1 \rangle$ is non-zero.  Hence $J + 1 \neq 2$ in $\mathrm{C}^*(\Gamma(A, b))$ and $J \neq 1$ in $\Gamma(A, b)$.

Conversely, suppose $J \neq 1$ in $\Gamma(A, b)$.  As $J$ is central, $\langle J \rangle \cong \mathbb{Z}/2$ is a normal subgroup of $\Gamma(A, b)$.  There is a conditional expectation $E : \mathrm{C}^*(\Gamma(A, b)) \rightarrow \mathrm{C}^*(\langle J \rangle) \cong \mathbb{C}^2$ determined by $E(s) = s$ for $s \in \{1, J\}$ and $E(s) = 0$ for $s \in \Gamma(A, b) \setminus \{1, J\}$.  Let $\chi : \mathrm{C}^*(\langle J \rangle) \rightarrow \mathbb{C}$ be the character defined by $\chi(J) = -1$.  Then $\chi \circ E$ is a trace on $\mathrm{C}^*(\Gamma(A, b))$.  As $(\chi \circ E)(J + 1) = 0$ and $J + 1 \geq 0$, the trace $\chi \circ E$ vanishes on the ideal $\langle J + 1 \rangle \subseteq \mathrm{C}^*(\Gamma(A, b))$ and hence induces a trace $\tau$ on $\mathrm{C}^*(\Gamma(A, b)) / \langle J + 1 \rangle$.  Now, the trace $\tau \circ \pi$ on $\cal A$ is a trace where $\pi$ is the surjection in Theorem \ref{thm:surjection}.  We define a qc-correlation by \[ p(x, y|i, j) = \tau(\pi(e_{i, x}e_{j, y})) \quad \text{ for all } i, j, x, y. \]
By Theorem \ref{thm:surjection}, $\big(p(x, y|i, j)\big)$ is a perfect qc-strategy.

For (2) and (3), we let $\cal B$ denote either $\cal R^\omega$ or $\mathbb{M}_d$.  Suppose $\rho : \Gamma(A, b) \rightarrow U(\mathcal{B})$ is a group homomorphism such that $\rho(J) \neq 1$.  Let $q$ denote the spectral projection of $\rho(J)$ corresponding to the eigenvalue $-1$.  As $J \neq 1$, we have $q \neq 0$.  As $J$ is central in $\Gamma(A, b)$, the projection $q$ commutes with the image of $\rho$.  Now, $q \rho(\cdot)$ is a unitary representation of $\Gamma(A, b)$ on $U(q\cal B q)$ and $q \rho(J) = - q$.  When $\cal B = \mathbb{M}_d$, $q \cal B q \cong \mathbb{M}_{d'}$ for some $d' \geq 1$, and when $\cal B = \cal R^\omega$, $q \cal B q \cong \mathcal{R}^\omega$.  Hence after replacing $\cal B$ with $q \cal B q$ and $\rho$ with $q \rho(\cdot)$, we may assume $\rho(J) = -1$.  Now $\rho$ induces a *-homomorphism $\mathrm{C}^*(\Gamma(A, b)) \rightarrow \cal B$ vanishing on $J + 1$ and hence induces a *-homomorphism
\[ \cal A \overset{\pi}{\longrightarrow} \mathrm{C}^*(\Gamma(A, b)) / \langle J + 1 \rangle \rightarrow \cal B. \]
The trace on $\cal B$ defines a trace on $\cal A$ which in turn defines a winning q-strategy when $\cal B$ is finite dimensional and a winning qa-strategy when $\cal B = \cal R^\omega$.

Now suppose $\operatorname{synBCS}(A, b)$ has a perfect qa-strategy.  As in Theorem \ref{thm:surjection}, there is a trace $\tau$ on $\mathrm{C}^*(\Gamma(A, b)) / \langle J + 1 \rangle$ which factors through the trace on $\cal R^\omega$.  The GNS representation of $\tau$ induces a representation of $\mathrm{C}^*(\Gamma(A, b)) / \langle J + 1 \rangle \rightarrow \mathcal{R}^\omega$ which in turn induces a representation $\rho : \Gamma(A, b) \rightarrow \cal R^\omega$ with $\rho(J) = -1$.  Similarly, if $\operatorname{synBCS}(A, b)$ has a perfect q-strategy, one produces a representation of $\Gamma(A, b)$ in the same way using a finite dimensional algebra in place of $\cal R^\omega$.
\end{proof}

The following result is due to Slofstra in \cite{Slofstra17}.

\begin{thm}\label{thm:Slofstra'sGroup}
There is a linear system $A x = b$ such that there is a representation $\rho : \Gamma(A, b) \rightarrow U(\mathcal{R}^\omega)$ such that $\rho(J) \neq 1$ but for every finite dimensional representation $\rho_0 : \Gamma(A, b) \rightarrow U(\mathbb{M}_d)$, $\rho(J) = 1$.
\end{thm}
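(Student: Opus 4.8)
This is exactly the theorem of Slofstra quoted just above, so my plan is to follow his argument, transported into the language of solution groups. By Corollary~\ref{cor:SynStratsAndReps}, the assertion is equivalent to producing a $\mathbb{Z}/2$-linear system $Ax=b$ for which $\operatorname{synBCS}(A,b)$ has a perfect qa-strategy but no perfect q-strategy; that is, one wants the solution group $\Gamma(A,b)$ to admit a unitary representation into $\cal{R}^\omega$ sending $J$ to something nontrivial, while every finite-dimensional unitary representation collapses $J$. So I would work entirely inside the class of solution groups of $\mathbb{Z}/2$-linear systems.

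The engine is an embedding theorem for solution groups: given a suitably flexible finitely presented group $H$ equipped with a distinguished central element $\jmath$ of order $2$, one constructs a linear system $Ax=b$ together with a surjection $\Gamma(A,b) \to H$ carrying $J$ to $\jmath$, with enough control that (i) a unitary representation of $H$ sending $\jmath$ to $-1$ pulls back along the surjection to one of $\Gamma(A,b)$ sending $J$ to $-1$, and (ii) conversely the representation theory of $\Gamma(A,b)$ — in particular whether $J$ can be made nontrivial in finite dimensions — is governed by that of $H$. Concretely, one reads the defining relators of $H$ as $\mathbb{Z}/2$-equations after adjoining $J$ in the role of $-1$, introducing auxiliary variables both to force the variables appearing in a common equation to commute and to keep the arity of each equation bounded, and then one verifies that the relations automatically imposed by the solution-group format (the $u_j^2=1$, the commutations inside each $V_i$, the centrality of $J$, and $\prod_{j\in V_i}u_j=J^{b_i}$) introduce no collapse beyond the intended one. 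Granting such a theorem, it remains only to exhibit a single finitely presented group $H$ with a central element $\jmath$, $\jmath^2=1$, such that: (a) $H$ has a matrix representation that is approximately multiplicative in the normalized Hilbert-Schmidt norm $\|\cdot\|_2$ and sends $\jmath$ to $-1$ — whence, by an ultraproduct and a corner-cutting argument exactly like the one in the proof of Corollary~\ref{cor:SynStratsAndReps}, a genuine representation $H \to \cal{R}^\omega$ with $\jmath \mapsto -1 \ne 1$; while (b) every exact finite-dimensional unitary representation of $H$ sends $\jmath$ to $1$. Such an $H$ is manufactured by an extension/amalgamation construction that encodes a relation holding only approximately in matrices: in any exact finite-dimensional representation the unavoidable defect is rigid enough to drag $\jmath$ onto the identity, whereas the defect disappears in the $\|\cdot\|_2$-limit, so $\jmath$ survives in $\cal{R}^\omega$. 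Feeding this $H$ through the embedding theorem yields the desired $Ax=b$.

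The main obstacle is the embedding theorem of step (ii)/(i). Keeping the equations of bounded arity while arranging that the auxiliary variables force no unintended identifications — so that $\Gamma(A,b)$ is exactly the intended group rather than a proper quotient in which $J$ has already become trivial — and then tracking how representations transfer across the surjection in both the finite-dimensional and the $\cal{R}^\omega$ regimes, is the technical heart of \cite{Slofstra17}. Pinning down the auxiliary group $H$, with its precise tension between approximate representability and exact non-representability of $\jmath$, is the other substantial ingredient; once both are in hand, the conclusion is immediate from Corollary~\ref{cor:SynStratsAndReps}.
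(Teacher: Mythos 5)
The paper does not prove this theorem---it is stated as a direct citation to Slofstra \cite{Slofstra17}---and your proposal, while giving an accurate high-level outline of Slofstra's argument (the embedding theorem for solution groups together with the construction of an auxiliary finitely presented group $H$ whose distinguished central involution survives in approximate finite-dimensional representations but collapses in exact ones), ultimately defers both of those ingredients to \cite{Slofstra17}, exactly as the paper does. So the two treatments coincide: both are citations rather than proofs, and your sketch correctly identifies where the substantive work in Slofstra's paper lies.
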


Combining Theorem \ref{thm:Slofstra'sGroup} with Corollary \ref{cor:SynStratsAndReps} provides a synchronous game which has a perfect qa-strategy but no perfect q-strategy.  Hence we have the following strengthening of Slofstra's result in \cite{Slofstra17}.

\begin{cor}
For sufficiently large $m$ and $n$, we have $C_q^s(m,2^n) =C_{qs}^s(m, 2^n) \neq C_{qa}^s(m, 2^n)$.  In particular, for sufficiently large $m,n$, $C_q^s(m, 2^n)=C_{qs}^s(m, 2^n)$ is not closed.
\end{cor}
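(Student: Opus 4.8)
The plan is to extract the parameters from Slofstra's system, translate the language of perfect strategies into statements about the sets $C^s_\bullet$, and then \emph{pad} the resulting example up to all large pairs $(m,n)$. First I would fix the $m_0 \times n_0$ linear system $Ax = b$ produced by Theorem~\ref{thm:Slofstra'sGroup}; deleting any zero columns of $A$ does not alter $\Gamma(A,b)$, so I may assume every column of $A$ is non-zero and invoke Corollary~\ref{cor:SynStratsAndReps}. By Theorem~\ref{thm:Slofstra'sGroup} the group $\Gamma(A,b)$ has a representation into $U(\cal R^\omega)$ not killing $J$ but admits no finite-dimensional one with this property, so parts (2) and (3) of Corollary~\ref{cor:SynStratsAndReps} say that the synchronous game $\operatorname{synBCS}(A,b)$, which has $m_0$ inputs and $2^{n_0}$ outputs, has a perfect qa-strategy but no perfect q-strategy. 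A perfect qa-strategy is a correlation $p \in C_{qa}(m_0,2^{n_0})$, and since the game is synchronous $p$ must be synchronous, so $p \in C^s_{qa}(m_0,2^{n_0})$; on the other hand $p \notin C^s_q(m_0,2^{n_0})$, since such a $p$ would be a perfect q-correlation for $\operatorname{synBCS}(A,b)$. Combined with the equality $C^s_q(n,m) = C^s_{qs}(n,m)$ established earlier, this gives $C^s_q(m_0,2^{n_0}) = C^s_{qs}(m_0,2^{n_0}) \neq C^s_{qa}(m_0,2^{n_0})$.

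Next I would pass from the single pair $(m_0,2^{n_0})$ to all $m \ge m_0$, $n \ge n_0$ by two padding moves on the witnessing correlation $p$. To increase the number of inputs, adjoin a new input duplicating input $1$ (on the operator side set $E_{m_0+1,i} = E_{1,i}$ and $F_{m_0+1,j} = F_{1,j}$, passing to limits of q-correlations for the qa-model); this keeps the correlation synchronous and inside $C^s_q$, resp.\ $C^s_{qa}$, while restricting a hypothetical q-model for the padded correlation back to the first $m_0$ inputs realizes $p$, so the padded correlation is still not in $C^s_q$. To increase the number of outputs, adjoin a ``dead'' outcome of probability $0$, setting the corresponding projections to $0$; this again preserves synchrony and all correlation values, and merging the dead outcome back into an existing outcome converts any q-model for the padded correlation into one for $p$, so the padded correlation remains outside $C^s_q$. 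Iterating these moves, for every $m \ge m_0$ and every $k \ge 2^{n_0}$ one obtains $C^s_q(m,k) \neq C^s_{qa}(m,k)$; specialising $k = 2^n$ with $n \ge n_0$ and using $C^s_q = C^s_{qs}$ once more yields the first assertion of the corollary.

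For the ``in particular'' clause I would use the equivalence of (1) and (2) in Theorem~\ref{syncapproxthm}, which says precisely that $C^s_{qa}(m,2^n)$ is the closure of $C^s_q(m,2^n)$. Since, for the parameters above, $C^s_q(m,2^n)$ is a proper subset of its own closure, it is not closed; and it agrees with $C^s_{qs}(m,2^n)$, which is the claim.

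I expect essentially no serious obstacle: the argument is bookkeeping on top of Theorem~\ref{thm:Slofstra'sGroup}, Corollary~\ref{cor:SynStratsAndReps} and Theorem~\ref{syncapproxthm}. The only step requiring a little care is the padding: one must check that duplicating an input and inserting a probability-zero output both respect the synchronicity constraint $V(v,v,i,j)=0$ for $i \neq j$, that they are reversible enough to carry non-membership in $C^s_q$ in both directions, and that the limiting description of $C^s_{qa}$ is compatible with these operations. All of these verifications are routine.
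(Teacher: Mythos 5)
Your proposal is correct and follows the paper's approach exactly: combine Theorem~\ref{thm:Slofstra'sGroup} with Corollary~\ref{cor:SynStratsAndReps} to obtain a synchronous game with a perfect qa-strategy but no perfect q-strategy, feed the resulting correlation through $C^s_q = C^s_{qs}$ and the closure characterisation $C^s_{qa} = \overline{C^s_q}$ from Theorem~\ref{syncapproxthm} (equivalence of (1) and (2)). The explicit input/output padding you add to cover all $(m,n)$ beyond the single Slofstra pair $(m_0,2^{n_0})$ is a correct and worthwhile elaboration of a step the paper leaves implicit in the phrase ``for sufficiently large $m$ and $n$.''
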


\begin{remark} If $C_{qs}(m, 2^n)$ or $C_q(m,2^n)$ was closed, then their subsets of synchronous elements would be closed. Since $C_q^s(m,2^n) = C_{qs}^s(m, 2^n)$, the above result implies Slofstra's result \cite{Slofstra17} that $C_q(m,2^n)$ and $C_{qs}(m,2^n)$ are not closed, for sufficiently large $m,n$. It is not clear if there is any direct proof of the converse, i.e., that the non-closure of the sets $C_{qs}(m,2^n)$ and $C_q(m,2^n)$ implies that their synchronous subsets are not closed.
\end{remark}

\section{Separating quantum independence numbers of graphs}


In this section we prove that there exists a graph $G$ for which $\alpha_{q}(G) < \alpha_{qa}(G)$.  Recall from Section 2 that for $t \in \{q, qa, qc \}$, the independence number $\alpha_t(G)$ is the largest $c \geq 1$ for which the graph homomorphism game $K_c \rightarrow \cl G$ has a perfect $t$-strategy.

%


First let us recall from \cite[Section 6]{graphisom} the graph $G_{A, b}$ defined for a linear system $Ax = b$ over $\mathbb{Z}/2$.

\begin{defn} Suppose $Ax = b$ is an $m \times n$ linear system over $\mathbb{Z}/2$ and $b \in (\mathbb{Z}/2)^n$.
  Define a graph $G_{A, b}$ with the following data:
  \begin{enumerate}
    \item the vertices of $G_{A, b}$ are pairs $(i,x)$ where $i \in \{1,\ldots, m\}$ and $x \in S_i$;
    \item there is an edge between distinct vertices $(i, x)$ and $(j, y)$ if and only if there exists some $k \in V_i \cap V_j$ for which $x_i \neq y_j$; that is, $x$ and $y$ are inconsistent solutions.
  \end{enumerate}
\end{defn}


\begin{lemma}\label{lemma:IndepNumberIsomInvariant}
Suppose $t \in \{q, qa, qc\}$.  If $G$ and $H$ are finite graphs and $G \cong_t H$ then $\alpha_t(G) = \alpha_t(H)$.
\end{lemma}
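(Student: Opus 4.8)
The plan is to show that a perfect $t$-strategy for the graph homomorphism game $K_c \to \overline{H}$ can be converted into a perfect $t$-strategy for the game $K_c \to \overline{G}$, using a perfect $t$-strategy for the $(G,H)$-isomorphism game as a "dictionary." This gives $\alpha_t(H) \le \alpha_t(G)$; by symmetry (replacing $G \cong_t H$ with $H \cong_t G$, which holds since the isomorphism game is symmetric in $G$ and $H$) we also get $\alpha_t(G) \le \alpha_t(H)$, and the lemma follows. Since the isomorphism game and the homomorphism game are both synchronous, I would work at the level of the affiliated $*$-algebras and their representations/traces rather than with correlations directly: for $t = q$ use finite-dimensional $*$-representations, for $t = qc$ use $*$-representations into a C$^*$-algebra with a trace, and for $t = qa$ use $*$-representations into $\mathcal{R}^\omega$ (invoking Theorem~\ref{syncapproxthm} and the corollary following it). This way all three cases are handled uniformly.

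The key step is the algebraic manipulation. Suppose we have projections $X_{g,h}$ ($g \in V(G)$, $h \in V(H)$) satisfying the isomorphism-game relations in some algebra $\cal B$ (one of the three types above), and projections $P_{a,h}$ ($a \in \{1,\dots,c\}$, $h \in V(H)$) in the same $\cal B$ giving a perfect strategy for $K_c \to \overline{H}$ — i.e., $\sum_h P_{a,h} = 1$, $\sum_a P_{a,h} = 1$, $P_{a,h}P_{b,h} = 0$ for $a \ne b$, and $P_{a,h}P_{b,h'} = 0$ whenever $a \ne b$ and $(h,h') \notin E(\overline H)$, i.e. whenever $a\ne b$ and $h=h'$ or $(h,h')\in E(H)$. (If the strategies live in different algebras, first pass to a common one, e.g. a free product, or reconstruct both from a single correlation; for $t=q$ one can also just tensor Hilbert spaces. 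I would note this once and not belabor it.) Define, for $a \in \{1,\dots,c\}$ and $g \in V(G)$,
\[
Q_{a,g} = \sum_{h \in V(H)} P_{a,h} X_{g,h}.
\]
I then need to check: each $Q_{a,g}$ is a projection; $\sum_g Q_{a,g} = 1$ for each $a$ and $\sum_a Q_{a,g} = 1$ for each $g$; and $Q_{a,g}Q_{b,g'} = 0$ whenever $a \ne b$ and ($g = g'$ or $(g,g') \in E(G)$). The relations $\operatorname{rel}(g,g') \ne \operatorname{rel}(h,h') \Rightarrow X_{g,h}X_{g',h'} = 0$, together with the $P$ relations, should force most cross terms to vanish; the self-adjointness and idempotency of $Q_{a,g}$ will use that $P_{a,h}$ and $X_{g,h}$ have vanishing products in the "wrong" combinations so that $Q_{a,g}$ behaves like a sum of orthogonal pieces. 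This is exactly the style of computation already used in the proof of Theorem~\ref{thm:surjection} (defining $v_k = \sum_x x_k e_{i,x}$ and checking it is a self-adjoint unitary), so I expect it to go through with the $\operatorname{rel}$ bookkeeping; I would also want to track the trace, $\tr(Q_{a,g}Q_{b,g'})$, to read off the resulting correlation in the $qc$ and $qa$ cases.

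The main obstacle is the combinatorial heart of the previous paragraph: verifying that the $\operatorname{rel}$-relations of the isomorphism game interact correctly with the clique-game orthogonality so that $(Q_{a,g})$ satisfies all the clique-game relations for $\overline G$ — in particular that $Q_{a,g}Q_{b,g'} = 0$ when $(g,g') \in E(G)$ and $a \ne b$, which is where one needs that an isomorphism-game strategy really does "transport edges to edges." A secondary technical point is making sure all strategies can be taken inside one algebra of the appropriate type so that the product $P_{a,h}X_{g,h}$ even makes sense; for $t = qc$ and $t = qa$ this is where Theorem~\ref{syncapproxthm} and the trace/$\mathcal{R}^\omega$ characterizations do the work, and one should be slightly careful that the two given strategies come from a single state on the relevant tensor or free product. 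Once these are in place, symmetry closes the argument.
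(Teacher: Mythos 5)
Your overall strategy matches the paper's: define a new family of projections by summing over the "middle" index, and close by symmetry of the isomorphism game. The formula $Q_{a,g} = \sum_{h} P_{a,h}X_{g,h}$ is exactly the paper's $f_{i,x} = \sum_{v} e_{i,v}\otimes q_{v,x}$, and working uniformly at the level of $*$-algebras with traces is also what the paper does. But there is a genuine gap in how you propose to make the product $P_{a,h}X_{g,h}$ meaningful, and it sits at the exact place you flagged as the "secondary technical point."

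Your suggestion to pass to a common algebra via a free product would not work: in a free product the images of $P_{a,h}$ and $X_{g,h}$ need not commute, so $P_{a,h}X_{g,h}$ need not be self-adjoint, let alone a projection, and none of the orthogonality bookkeeping you hope to run will go through. The suggestion to "reconstruct both from a single correlation" is also not available here, since the two strategies are perfect strategies for two \emph{different} games and there is no reason a single state should produce both. The resolution in the paper is the tensor product: keep the clique-game strategy in $\mathcal A$ (with trace $\tau_{\mathcal A}$) and the isomorphism-game strategy in $\mathcal B$ (with trace $\tau_{\mathcal B}$), form $\mathcal A\otimes\mathcal B$ with the product trace $\tau_{\mathcal A}\otimes\tau_{\mathcal B}$, and set $f_{i,x}=\sum_{v}e_{i,v}\otimes q_{v,x}$. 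Tensoring forces the needed commutativity for free, the summands are mutually orthogonal by the isomorphism-game relations, and the trace factors as $(\tau_{\mathcal A}\otimes\tau_{\mathcal B})(f_{i,x}f_{j,y})=\sum_{v,w}\tau_{\mathcal A}(e_{i,v}e_{j,w})\,\tau_{\mathcal B}(q_{v,x}q_{w,y})$, which vanishes when $(x,y)\in E(H)$ because in each term either $(v,w)\in E(G)$ (kills the $\tau_{\mathcal A}$ factor) or $(v,w)\notin E(G)$ with $(x,y)\in E(H)$ (kills the $\tau_{\mathcal B}$ factor by the $\operatorname{rel}$ condition). To get the right type $t$ on the output, one then uses that the tensor of two traces factoring through finite-dimensional algebras again factors through a finite-dimensional algebra, and the tensor of two amenable traces is amenable; this is the substitute for your informal "Theorem \ref{syncapproxthm} does the work" remark and is what makes the argument uniform in $t\in\{q,qa,qc\}$. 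Your instinct that "for $t=q$ one can also just tensor Hilbert spaces" is the right special case; the missing step is to realize the tensor construction, not the free product, is what works in all three cases.
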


\begin{proof}
Let $V = V(G) \cup V(H)$. It suffices to show that if $G \cong_t H$, then whenever $\alpha_t(G) \geq c$, we also have $\alpha_t(H) \geq c$.  As $\alpha_t(G) \geq c$, there is a C$^*$-algebra $\cal A$, a tracial state $\tau_\cal A$ on $\cal A$, and projections $e_{i, v} \in \cal A$ for $i = 1, \ldots, c$ and $v \in V(G)$ such that $\sum_v e_{i, v} = 1$ for all $i = 1, \ldots, c$ and $\tau(e_{i, v} e_{j, w}) = 0$ whenever $(v, w) \in E(G)$.  If $t = q$, we may assume $\tau_A$ factors through a finite dimensional algebra and if $t = qa$, we may assume $\tau_\cal A$ is amenable.

Similarly, since $G \cong_t H$, there is a C$^*$-algebra $\cal B$, a tracial state $\tau_\cal B$ on $\cal B$, and projections $q_{v, w} \in \cal B$ for $v, w \in V$ such that $\sum_{w \in V} q_{v, w} = 1$ for all $v \in V$ and such that if $v, w \in V(G)$ and $x, y \in V(H)$ with $\text{rel}(v,w) \neq \text{rel}(x,y)$ then $\tau_B(q_{vx}q_{wy}) = 0$.  (Note that there are other relations in the graph isomorphism game; these are the only ones we will need to use here.)  Again we choose $\tau_\cal B$ to factor through a finite dimensional algebra if $t = q$ and we choose $\tau_\cal B$ to be amenable if $t = qa$.

For $i = 1, \ldots, c$ and $x \in V(H)$, define
\[ f_{i, x} = \sum_{v \in V(G)} e_{i, v} \otimes f_{v, x} \in \cal A \otimes \cal B. \]
Then each $f_{i, x}$ is a projection and for all $i = 1, \ldots, c$, we have $\sum_x f_{i, x} = 1$.  If $x, y \in V(H)$ and $(x, y) \in E(H)$, then
\[ \tau_\cal A \otimes \tau_\cal B(f_{i, x} f_{j, y}) = \sum_{v, w \in V(G)} \tau_\cal A(e_{i, v} e_{j, w}) \tau_\cal B(f_{v, x} f_{w, y}). \]
For $v, w \in V(G)$, if $(v, w) \in E(G)$, then $\tau_\cal A(e_{i, v} e_{j, w}) = 0$, and if $(v, w) \notin E(G)$, then $\tau_\cal B(f_{v, x} f_{w, y}) = 0$.  Hence the projections $f_{i, x} \in \cal A \otimes \cal B$ and the trace $\tau_\cal A \otimes \tau_\cal B$ determine a perfect qc-strategy for the graph homomorphism game from $K_c$ to $\cl H$.  If $\tau_A$ and $\tau_B$ factor through finite dimensional algebras, so does $\tau_A \otimes \tau_B$.  If $\tau_A$ and $\tau_B$ are amenable, so is $\tau_A \otimes \tau_B$.  Hence in all cases, $\alpha_t(H) \geq c$.
\end{proof}

It is shown in \cite[Theorem 3.7]{OrtizPaulsen} that for $t \in \{q,qa,qc\}$ and graphs $G$, $H$ and $K$, if $G \stackrel{t}{\rightarrow} H$ and $H \stackrel{t}{\rightarrow} K$ then
$G \stackrel{t}{\rightarrow} K$. This leads to the following corollary.

\begin{cor}
If $t \in \{q, qa, qc \}$ and $G$ is a finite graph, then $\alpha_t(G) \leq \chi_t(\cl{G})$.
\end{cor}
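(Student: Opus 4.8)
The plan is to reduce everything to the transitivity of $\stackrel{t}{\rightarrow}$ quoted from \cite{OrtizPaulsen} together with the already-established fact that $\chi_t(K_n) = n$ for $t \in \{q, qa, qc\}$. Write $c = \alpha_t(G)$ and $c' = \chi_t(\cl G)$. By definition of $\alpha_t$ (via $\omega_t$), we have a perfect $t$-strategy for the graph homomorphism game $K_c \stackrel{t}{\rightarrow} \cl G$, and by definition of $\chi_t$ we have a perfect $t$-strategy for $\cl G \stackrel{t}{\rightarrow} K_{c'}$.

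The first key step is to compose these: applying \cite[Theorem~3.7]{OrtizPaulsen} with $G \rightsquigarrow K_c$, $H \rightsquigarrow \cl G$, $K \rightsquigarrow K_{c'}$ yields $K_c \stackrel{t}{\rightarrow} K_{c'}$. The second key step is to invoke the computation from Section~2 that $\chi_t(K_n) = n$ for all $t \in \{q, qs, qa, qc\}$; as noted there, this forces $c \le c'$ whenever $K_c \stackrel{t}{\rightarrow} K_{c'}$ (a homomorphism $K_c \to K_{c'}$ would be a perfect $t$-strategy witnessing $\chi_t(K_c) \le c'$, contradicting $\chi_t(K_c) = c$ if $c > c'$). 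Combining, $\alpha_t(G) = c \le c' = \chi_t(\cl G)$.

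There is essentially no serious obstacle here; the only point requiring a modicum of care is bookkeeping the direction of the homomorphism games and the definitions $\alpha_t(G) = \omega_t(\cl G)$, $\chi_t(\cl G) = \min\{c : \exists\, \cl G \stackrel{t}{\to} K_c\}$, so that the two strategies are composable in the stated order. One should also remark that all three values $t \in \{q, qa, qc\}$ are simultaneously covered because both \cite[Theorem~3.7]{OrtizPaulsen} and the identity $\chi_t(K_n) = n$ hold uniformly across these cases.

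\begin{proof}
Set $c = \alpha_t(G)$ and $c' = \chi_t(\cl G)$. Since $\alpha_t(G) = \omega_t(\cl G) \geq c$, the graph homomorphism game from $K_c$ to $\cl G$ has a perfect $t$-strategy, i.e.\ $K_c \stackrel{t}{\rightarrow} \cl G$. Since $\chi_t(\cl G) = c'$, we also have $\cl G \stackrel{t}{\rightarrow} K_{c'}$. By \cite[Theorem~3.7]{OrtizPaulsen}, it follows that $K_c \stackrel{t}{\rightarrow} K_{c'}$. But as noted in Section~2, $\chi_t(K_c) = c$ for $t \in \{q, qa, qc\}$, and hence $K_c \stackrel{t}{\rightarrow} K_{c'}$ implies $c \leq c'$. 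Therefore $\alpha_t(G) = c \leq c' = \chi_t(\cl G)$.
\end{proof}
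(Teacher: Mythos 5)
Your proof is correct and follows essentially the same route as the paper: compose $K_c \stackrel{t}{\rightarrow} \cl G$ with $\cl G \stackrel{t}{\rightarrow} K_{c'}$ via \cite[Theorem~3.7]{OrtizPaulsen} and then invoke $\chi_t(K_c) = c$ to conclude $c \le c'$. No meaningful differences from the paper's argument.
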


\begin{proof}
  Suppose that $\alpha_t(G) = c$. By definition, there is a $t$-homomorphism
  $K_c \stackrel{t}{\rightarrow} \cl{G}$. If $\chi_{t}(\cl G) =d$ then there is a $t$-homomorphism, $\cl G \stackrel{t}{\rightarrow} K_d$. Since qa-homomorphisms are closed under composition, there is a $t$-homomorphism $K_c \stackrel{t}{\rightarrow} K_d$ which implies that $\chi_{t}(K_c) \leq d$.  As noted in Section 2 above, $\chi_{t}(K_c) = c$ and hence $c \leq d$ as claimed.
\end{proof}

In the case $t = q$, the following result appears as Theorem 6.2 in \cite{graphisom}.

\begin{thm}
Suppose $t \in \{q, qa, qc \}$ and let $Ax = b$ be an $m \times n$ linear system.  The following are equivalent:
\begin{enumerate}
  \item the game $\operatorname{synBCS}(A, b)$ has a winning $t$-strategy;
  \item $G_{A, b} \cong_t G_{A, 0}$;
  \item $\alpha_t(G_{A, b}) = m$.
\end{enumerate}
\end{thm}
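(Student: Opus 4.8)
The plan is to fix $t \in \{q, qa, qc\}$ and let $\cal B_t$ denote a full matrix algebra $\M_d$ when $t = q$, the ultrapower $\cal R^\omega$ when $t = qa$, and an arbitrary unital C$^*$-algebra with a faithful trace when $t = qc$; by \cite{HMPS} and the corollary following Theorem~\ref{syncapproxthm}, a synchronous game has a perfect $t$-strategy if and only if its affiliated $*$-algebra admits a unital $*$-representation into $\cal B_t$ (into a C$^*$-algebra with a trace when $t = qc$). We may assume $A$ has no zero row or column, since otherwise one either reduces to a smaller system or all three conditions fail simultaneously. By Corollary~\ref{cor:SynStratsAndReps}, condition (1) is equivalent to the existence of a unitary representation $\rho : \Gamma(A, b) \to U(\cal B_t)$ with $\rho(J) = -1$ (the normalization $\rho(J) = -1$ is obtained by compressing to the $(-1)$-eigenprojection of $\rho(J)$, exactly as in the proof of Corollary~\ref{cor:SynStratsAndReps}). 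We run the cycle $(1) \Rightarrow (2) \Rightarrow (3) \Rightarrow (1)$; this is the $qa$- and $qc$-analogue of Theorem~6.2 of \cite{graphisom}, with Corollary~\ref{cor:SynStratsAndReps}, Lemma~\ref{lemma:IndepNumberIsomInvariant} and the corollary $\alpha_t(G) \le \chi_t(\cl G)$ replacing the finite-dimensional ingredients used there.

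For $(1) \Rightarrow (2)$: given $\rho$ as above, write $U_k = \rho(u_k)$, so the $U_k$ are self-adjoint unitaries with $U_k U_\ell = U_\ell U_k$ for $k, \ell \in V_i$ and $\prod_{k \in V_i} U_k = (-1)^{b_i}$. Write $S_i$ for the solution set of the $i$th equation of $Ax = b$ and $S_i^0$ for that of $Ax = 0$. For $x \in S_i$ and $y \in S_i^0$ put $X_{(i, x), (i, y)} = \prod_{k \in V_i} \chi_{x_k y_k}(U_k)$, where $\chi_{\pm 1}(U_k)$ are the spectral projections of $U_k$, and put $X_{(i, x), (j, y)} = 0$ for $i \neq j$. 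Using only $U_k^2 = 1$, the block-commutation relations, $\prod_{k \in V_i} U_k = (-1)^{b_i}$, and the fact that $y \mapsto (x_k y_k)_{k \in V_i}$ is a bijection from $S_i^0$ onto $S_i$, one checks directly that the $X_{g, h}$ are projections with $\sum_h X_{g, h} = \sum_g X_{g, h} = 1$ and $\operatorname{rel}(g, g') \neq \operatorname{rel}(h, h') \Rightarrow X_{g, h} X_{g', h'} = 0$; this yields a unital $*$-representation of $\cal A(G_{A, b} \cong G_{A, 0})$ into $\cal B_t$, so $G_{A, b} \cong_t G_{A, 0}$.

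For $(2) \Rightarrow (3)$: Lemma~\ref{lemma:IndepNumberIsomInvariant} gives $\alpha_t(G_{A, b}) = \alpha_t(G_{A, 0})$. The all-ones vector solves $Ax = 0$, so $\{(i, \mathbf{1}) : 1 \le i \le m\}$ is a classical independent set of size $m$ in $G_{A, 0}$, whence $\alpha_t(G_{A, 0}) \ge \alpha(G_{A, 0}) \ge m$; and since the $m$ equation-blocks of $G_{A, 0}$ are cliques covering its vertex set we get $\chi(\cl{G_{A, 0}}) \le m$, so the corollary $\alpha_t(G) \le \chi_t(\cl G) \le \chi(\cl G)$ forces $\alpha_t(G_{A, 0}) \le m$. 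Hence $\alpha_t(G_{A, b}) = m$.

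For $(3) \Rightarrow (1)$, the main step: a perfect $t$-strategy for $K_m \to \cl{G_{A, b}}$ gives a tracial C$^*$-algebra (which may be taken to be $\M_d$ for $t = q$ and $\cal R^\omega$ for $t = qa$) with projections $p_{r, (i, x)}$ satisfying $\sum_{(i, x)} p_{r, (i, x)} = 1$ for each colour $r$ and $\tr(p_{r, v} p_{s, w}) = 0$ whenever $r = s$ and $v \neq w$, or $r \neq s$ and $v, w$ are equal or adjacent in $G_{A, b}$. Pass to the finite von Neumann algebra $M$ generated by these projections in the tracial GNS representation, where (the trace being faithful) trace-orthogonal projections become genuinely orthogonal. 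Set $e_{i, x} = \sum_{r = 1}^m p_{r, (i, x)}$ for $x \in S_i$ and $e_{i, x} = 0$ otherwise, and write $P_r^{(i)} = \sum_{x \in S_i} p_{r, (i, x)}$. The relations make $\{P_r^{(i)}\}_i$ a partition of unity for each $r$ and give $P_r^{(i)} P_s^{(i)} = 0$ for $r \neq s$, so $Q^{(i)} := \sum_r P_r^{(i)}$ is a projection; since $\sum_{i = 1}^m \tr(Q^{(i)}) = \sum_r \tr(1) = m$ while each $\tr(Q^{(i)}) \le 1$, each $Q^{(i)} = 1$, that is, $\sum_x e_{i, x} = 1$, so $\{e_{i, x}\}_x$ is a projection-valued measure for each $i$. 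The adjacency and orthogonality relations give $e_{i, x} e_{j, y} = 0$ whenever Alice and Bob lose $\operatorname{synBCS}(A, b)$ on $(x, y \mid i, j)$, and then, exactly as in the proof of Theorem~\ref{thm:surjection} (whose relations holding modulo the trace-kernel now hold honestly in $M$), $v_k := \sum_{x \in S_i} x_k e_{i, x}$ for any $i$ with $k \in V_i$ is a well-defined self-adjoint unitary obeying the block-commutation relations and $\prod_{k \in V_i} v_k = (-1)^{b_i}$, hence defines $\rho : \Gamma(A, b) \to U(M)$ with $\rho(J) = -1$. Since $M$ is finite-dimensional for $t = q$, admits a trace-preserving embedding into $\cal R^\omega$ for $t = qa$, and always carries a faithful trace, Corollary~\ref{cor:SynStratsAndReps} produces a winning $t$-strategy for $\operatorname{synBCS}(A, b)$. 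I expect $(3) \Rightarrow (1)$ to be the main obstacle: one must move to the von Neumann completion to upgrade trace-orthogonality to honest orthogonality (so the $e_{i, x}$ really are projections summing to $1$), and then verify that this completion retains the feature distinguishing $\cal B_t$ — finite-dimensionality when $t = q$, embeddability into $\cal R^\omega$ when $t = qa$ — so that the extracted representation of $\Gamma(A, b)$ corresponds to a strategy of the correct type.
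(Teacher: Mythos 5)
Your proposal is correct and follows the same overall $(1) \Rightarrow (2) \Rightarrow (3) \Rightarrow (1)$ cycle as the paper; steps $(1) \Rightarrow (2)$ and $(2) \Rightarrow (3)$ are essentially identical. The interesting difference is in $(3) \Rightarrow (1)$. The paper attempts to extract the solution-group representation directly from the colouring strategy: it defines, for each colour $r$ and variable $j \in V_r$, a candidate unitary
\[ v_{r,j} = \sum_{p=1}^{m} \sum_{x \in S_p} x_j\, e_{r,(p,x)}, \]
and argues $\tau(v_{r,j}v_{s,j})=1$ because ``when $x_j \ne y_j$ there is an edge between $(p,x)$ and $(q,y)$.'' This justification is only clear when $j \in V_p \cap V_q$; when $j$ lies in $V_p$ but not $V_q$ (or vice versa), $x_j \ne y_j$ does not produce an edge, and the corresponding cross-term in the double sum is not obviously zero. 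Your route sidesteps this delicate point entirely: you first average over colours to build $e_{i,x} = \sum_r p_{r,(i,x)}$, prove these form PVMs via the bistochastic trace argument ($\sum_i \tr(Q^{(i)}) = m$ forces each $Q^{(i)} = 1$), and then verify that the $e_{i,x}$ genuinely satisfy the $\operatorname{synBCS}$ relations in the von Neumann completion — after which the well-definedness of $v_k$ is exactly the (clean) computation from the paper's Theorem~\ref{thm:surjection}, where the variable $k$ is assumed to lie in $V_i \cap V_j$ from the outset. So your version buys a more transparent justification of well-definedness at the cost of one extra layer of projections; and you also make explicit the issue (implicit but elided in the paper) of passing to the GNS von Neumann completion to promote trace-orthogonality to genuine orthogonality while preserving the defining feature of $\cal B_t$ (finite-dimensionality when $t=q$, embeddability in $\cal R^\omega$ when $t=qa$). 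Both arguments ultimately deliver the same representation $\rho : \Gamma(A,b) \to U(M)$ with $\rho(J)=-1$ and then close via Corollary~\ref{cor:SynStratsAndReps}.
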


\begin{proof}
(1) $\Rightarrow$ (2): Suppose that we have a winning $t$-strategy for the $\operatorname{synBCS(A, b)}$.  Fix a C$^*$-algebra $\cal B$, a faithful trace $\tau \in \cal B$, and projections $e_{i, x} \in \cal B$ for $i = 1, \ldots, m$ and $x \in \{\pm 1\}^n$ such that $\sum_x e_{i, x} = 1$ for all $i$, $e_{i, x} = 0$ if $x \notin S_i$, and $e_{i, x} e_{j, y} = 0$ if there is a $k \in V_i \cap V_j$ with $x_k \neq y_k$.  If $t = q$, we assume $\cal B$ is finite dimensional and if $t = qa$, we assume $\cal B \subseteq \cal R^\omega$.  Let $\cal G$ be the isomorphism game for $(G_{A, b}, G_{A, 0})$ and let $\cal A(\cal G)$ denote the algebra associated to $\cal G$ as defined in Section 2 above. It suffices to construct a unital *-homomorphism $\pi: \cal A(\cal G) \rightarrow \cal B$.

Let $S_i^0 \subseteq \{\pm 1\}^n$ denote the set of solutions to the $i$th equation of the linear system $Ax = 0$ and let $S_i^1 \subseteq \{\pm 1\}^n$ denote the set of solutions to the $i$th equation of the linear system $Ax = b$.  Given $x, y \in \{\pm 1\}^n$, let $xy \in \{\pm 1\}^n$ denote the pointwise product of $x$ and $y$.  Note that if $x \in S_i^0$ and $y \in S_i^0$, then $xy \in S_i^1$.  Moreover, for $x \in S_i^1$, the map $S_i^0 \rightarrow S_i^1$ given by $y \mapsto xy$ is a bijection.

For $(i, x) \in V(G_{A, b})$ and $(j, y) \in V(G_{A, 0})$, define
\[ q_{(i, x), (j, y)} = \begin{cases} e_{i, xy} & i = j \\ 0 & i \neq j \end{cases} \]
and note that each $q_{(i, x), (j, y)}$ is a projection.  For $(i, x) \in V(G_{A, b})$, we have
\[ \sum_{(j, y) \in V(G_{A, 0})} q_{(i, x), (j, y)} = \sum_{j=1}^n \sum_{y \in S_j^0} q_{(i, x), (j, y)} = \sum_{y \in S_i^0} e_{i, xy} = \sum_{z \in S_i^1} e_{i, z} = 1. \]
A similar computation shows that for all $(j, y) \in V(G_{A, 0})$, we have
\[ \sum_{(i, x) \in V(G_{A, b})} q_{(i, x), (j, y)} = 1. \]

We need to show that for all $(i, x), (i', x') \in V(G_{A, b})$ and $(j, y), (j', y') \in V(G_{A, 0})$, the implication
\[ q_{(i,x),(j,y)} q_{(i',x'),(j',y')} \neq 0 \quad \Rightarrow \quad \operatorname{rel}((i,x), (i',x')) = \operatorname{rel}((j,y),(j',y')) \]
holds.  To this end, suppose $q_{(i,x),(j,y)} q_{(i',x'),(j',y')} \neq 0$.  Then $i = j$, $i' = j'$, and $e_{i, xy} e_{i', x'y'} \neq 0$.  We consider several cases.

Suppose first $i = i'$.  Then we have $xy = x'y'$.  If $x = x'$, then $y = y'$ and we have both $(i, x) = (i', x')$ and $(j, y) = (j', y')$ so the right hand side of the implication holds in the case.  Conversely, if $x \neq x'$ and $y \neq y'$, then $(i, x) \neq (i', x')$ and $(j, y) \neq (j', y')$.  Note also that since $i = i'$, $x$ and $x'$ are necessarily inconsistent solutions so that $(i, x)$ and $(i', x')$ are adjacent.  Similar reasoning shows $(j, y)$ and $(j', y')$ are adjacent.  Hence the right hand side of the implication holds.

Now assume $i \neq i'$ so that, in particular, $(i, x) \neq (i', x')$.  If $(i, x)$ and $(i', x')$ are adjacent, there is a $k \in V_i \cap V_{i'}$ such that $x_k \neq x'_k$.  On the other hand, as $e_{i, xy} e_{i', x'y'} \neq 0$, we know $x_k y_k = (xy)_k = (x'y')_k = x'_k y'_k$.  Therefore, $y_k \neq y'_k$ so that $(i, y)$ and $(i', y')$ are adjacent.  Finally, suppose $(i, x)$ and $(i', x')$ are not adjacent.  Then $x_k = x'_k$ for all $i \in V_i \cap V_{i'}$.  Again since $e_{i, xy} e_{i', x'y'} \neq 0$, we also know $x_k y_k = x'_k y'_k$ for all $k \in V_i \cap V_{i'}$ and therefore $y_k = y'_k$ for all $k \in V_i \cap V_{i'}$ so that $(j, y)$ and $(j', y')$ are not adjacent.  This covers all cases.

Now, the projections $q_{(i, x), (j, y)} \in \cal B$ define a unital *-representation $\pi: \cal A(\cal G) \rightarrow \cal B$ and it follows that $G_{A, b} \cong_t G_{A, 0}$.

(2) $\Rightarrow$ (3): Suppose that $G_{A, b} \cong_t G_{A, 0}$.  By Lemma \ref{lemma:IndepNumberIsomInvariant}, it suffices to show that $\alpha_t(G_{\cal F_0}) = m$.  The map $f : \cl{G_{A, 0}} \to \{1, \ldots, m \}: (i,x) \mapsto i$ is an $m$-colouring of $\cl{G_{A, 0}}$.  Indeed, suppose are $(i, x)$ and $(j, y)$ are distinct vertices in $\cl{G_{A, 0}}$ with $f(i, x) = f(j, y)$.  Then $i = j$ and hence $x \neq y$.  That is, there is some $k \in V_i$ such that $x_k \neq y_k$ and thus there is no edge between $(i, x)$ and $(j, y)$ in $\cl{G_{A, 0}}$.

For each $i = 1, \ldots, m$, the vector $x_0 = (1, \ldots, 1)$ is in $S_i \subseteq \{\pm 1\}^n$ for the system $Ax = 0$.  Hence for $i, j = 1, \ldots, m$, there is no edge between the vertices $(i, x_0)$ and $(j, x_0)$ in $\cl{G_{A, 0}}$ and we have $\alpha(G_{A, 0}) \geq m$.  Now,
\begin{align*}
  m \geq \chi(\cl{G_{A, 0}}) \geq \chi_t(\cl{G_{A, 0}}) \geq \alpha_t(G_{A, 0}) \geq \alpha(G_{A, 0}) \geq m,
\end{align*}
and $\alpha_t(G_{A, 0}) = m$.

(3) $\Rightarrow$ (1):  Suppose $\alpha_t(G_{A, b}) = m$.  Then the graph homomorphism game from $K_m$ to $\overline{G_{A, b}}$ has a perfect $t$-strategy.  Fix a C$^*$-algebra $\cal A$ with a faithful trace $\tau$ and projections $e_{k, i, x} \in \cal A$ for $i = 1, \ldots, m$, $v \in V(G_{A, b})$ such that
\begin{enumerate}
  \item $\displaystyle \sum_{i=1}^m \sum_{x \in S_i} e_{k, i, x} = 1$ for all $1 \leq k \leq m$, and
  \item $\tau(e_{k, i, x} e_{\ell, j, y}) = 0$ if there is an edge between $(i, x)$ and $(j, y)$ in $G_{A, b}$.
\end{enumerate}
If $t = q$, we may assume $\cal A$ is finite dimensional and if $t = qa$, we may assume $\cal A = \cal R^\omega$.

Define for $i = 1, \ldots, m$ and $j \in V_i$,
\[ v_{i,j} = \sum_{k=1}^m \sum_{x \in S_k} x_j e_{i, k, x} \]
and note that $a_{i, j}$ is a self-adjoint unitary since by since the $\sum_{k, x} e_{i, k, x} = 1$ and for all $j$, we have $x_j \in \{\pm 1\}$.
Also, $v_{i, j}$ and $v_{i, k}$ commute for all $i, j, k = 1, \ldots m$.

For all $i,k$ and $j \in V_i \cap V_k$,
\[ \tau(v_{i, j} v_{k, j}) = \sum_{p, q = 1}^m \sum_{x \in S_p, y \in S_q} x_j y_j \tau(e_{i, p, x} e_{k, q, y}). \]
Note that when $x_j \neq y_j$, there is an edge between $(p, x)$ and $(q, y)$ in $G_{A, b}$ and hence $\tau(e_{i, p, x} e_{k, q, y}) = 0$.  Moreover, when $x_j = y_j$, $x_j y_j = 1$.  Hence we have
\[ \tau(v_{i, j} v_{k, j}) = \sum_{p, q = 1}^m \sum_{x \in S_p, y \in S_q} \tau(e_{i, p, x} e_{k, q, y}) = 1. \]
Now, for $i, k = 1, \ldots, m$ and $j \in V_i \cap V_k$, we have
\[ \tau((v_{i, j} - v_{k, j})^*(v_{i, j} - v_{k, j})) = 2 - 2 \tau(v_{i, j} v_{k, j}) = 0, \]
and hence $v_{i, j} = v_{k, j}$ as $\tau$ is faithful.

Given $j = 1, \ldots, n$, define $w_j = v_{i, j}$ if $j \in V_i$ for some $i = 1, \ldots, m$ and $w_j = 1$ otherwise.  By the previous paragraph, this is well-defined.  If $j, k = 1, \ldots, n$ and there is an $i = 1, \ldots, m$ with $j, k \in V_i$, then $w_j = v_{i, j}$ and $w_k = v_{i, k}$ commute.  Moreover, for each $i = 1, \ldots, m$,
\[ \prod_{j \in V_i} w_j = \prod_{j \in V_i} v_{i, j} = \sum_{k=1}^m \sum_{x \in S_{\ell}} \prod_{j \in V_i} x_j e_{i,k, x} = (-1)^{(b_i)}. \]
Hence there is a representation $\rho : \Gamma(A, b) \rightarrow U(\mathcal{A})$ such that $\rho(u_i) = w_i$ and $\rho(J) = -1$ for all $i = 1, \ldots, n$.  By Corollary \ref{cor:SynStratsAndReps}, the game $\operatorname{synBCS}(A, b)$ has a perfect $t$-strategy which proves (1).
\end{proof}

\begin{cor}
  There exists a graph $G$ for which $\alpha_{qa}(G) > \alpha_{q}(G)$.
\end{cor}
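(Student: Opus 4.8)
The plan is to take $G = G_{A,b}$ for the particular linear system $Ax = b$ furnished by Slofstra's Theorem~\ref{thm:Slofstra'sGroup}, and then to read off the two independence numbers directly from the preceding theorem relating $\operatorname{synBCS}(A,b)$, the solution group $\Gamma(A,b)$, and $\alpha_t(G_{A,b})$.

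First I would invoke Theorem~\ref{thm:Slofstra'sGroup}: there is an $m \times n$ linear system $Ax = b$ over $\mathbb{Z}/2$ admitting a representation $\rho : \Gamma(A,b) \to U(\mathcal{R}^\omega)$ with $\rho(J) \neq 1$, while every finite-dimensional representation $\rho_0 : \Gamma(A,b) \to U(\mathbb{M}_d)$ satisfies $\rho_0(J) = 1$. By Corollary~\ref{cor:SynStratsAndReps}(2), the $\mathcal{R}^\omega$-representation with $\rho(J) \neq 1$ says precisely that $\operatorname{synBCS}(A,b)$ has a perfect qa-strategy; by Corollary~\ref{cor:SynStratsAndReps}(3), the absence of a finite-dimensional representation with $\rho_0(J) \neq 1$ says precisely that $\operatorname{synBCS}(A,b)$ has no perfect q-strategy.

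Next I would apply the equivalence (1)$\Leftrightarrow$(3) of the theorem just proved to this same system, once with $t = qa$ and once with $t = q$. The qa-case yields $\alpha_{qa}(G_{A,b}) = m$, while the q-case, combined with the non-existence of a perfect q-strategy, yields $\alpha_q(G_{A,b}) \neq m$. Since any perfect q-strategy for the graph homomorphism game from $K_c$ to $\overline{G_{A,b}}$ is in particular a perfect qa-strategy (as $C_q^s \subseteq C_{qa}^s$), we have $\alpha_q(G_{A,b}) \le \alpha_{qa}(G_{A,b}) = m$. Hence $\alpha_q(G_{A,b}) < m = \alpha_{qa}(G_{A,b})$, and $G = G_{A,b}$ is the required graph.

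I do not expect a genuine obstacle: all of the substance has already been absorbed into Slofstra's Theorem~\ref{thm:Slofstra'sGroup} and into the chain of equivalences developed in this section. The one point deserving an explicit line is the monotonicity $\alpha_q(G) \le \alpha_{qa}(G)$, which upgrades ``$\alpha_q(G_{A,b}) \neq m$'' to a strict inequality; this follows at once from the inclusion of synchronous q-correlations in synchronous qa-correlations applied to the homomorphism game $K_c \to \overline{G_{A,b}}$.
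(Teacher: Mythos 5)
Your proposal is correct and takes exactly the route the paper intends: take $G = G_{A,b}$ for Slofstra's linear system, translate via Corollary~\ref{cor:SynStratsAndReps} to the existence of a perfect qa-strategy and the absence of a perfect q-strategy for $\operatorname{synBCS}(A,b)$, then apply the equivalence $(1)\Leftrightarrow(3)$ of the preceding theorem for $t = qa$ and $t = q$ together with the inclusion $C^s_q \subseteq C^s_{qa}$ to get $\alpha_q(G_{A,b}) < m = \alpha_{qa}(G_{A,b})$. The paper states this corollary without a written proof precisely because it is this immediate consequence of the theorem and Corollary~\ref{cor:SynStratsAndReps}; your tracing of the monotonicity step $\alpha_q \le \alpha_{qa}$ is the only detail that deserves the explicit sentence you give it.
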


\begin{cor}
There exist graphs $G$ and $H$ for which $G \cong_{qa} H$ but $G \not\cong_q H$.
\end{cor}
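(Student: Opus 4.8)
The plan is to feed Slofstra's linear system into the trichotomy theorem proved above relating the game $\operatorname{synBCS}(A,b)$, the isomorphism game for $(G_{A,b},G_{A,0})$, and the parameter $\alpha_t(G_{A,b})$. First I would invoke Theorem~\ref{thm:Slofstra'sGroup} to fix an $m\times n$ linear system $Ax=b$ over $\mathbb{Z}/2$ admitting a representation $\rho:\Gamma(A,b)\to U(\cal R^\omega)$ with $\rho(J)\neq 1$, but for which every finite-dimensional representation $\rho_0:\Gamma(A,b)\to U(\mathbb{M}_d)$ satisfies $\rho_0(J)=1$. After discarding any variables that occur in no equation (which alters neither $\Gamma(A,b)$ nor $\operatorname{synBCS}(A,b)$ nor the graphs $G_{A,b},G_{A,0}$), we may assume every column of $A$ is nonzero, so Corollary~\ref{cor:SynStratsAndReps} applies: part (2) gives that $\operatorname{synBCS}(A,b)$ has a perfect qa-strategy, while part (3) gives that it has no perfect q-strategy.

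Next I would apply the preceding theorem twice, once with $t=qa$ and once with $t=q$, using the equivalence of its conditions (1) and (2), namely that $\operatorname{synBCS}(A,b)$ has a winning $t$-strategy if and only if $G_{A,b}\cong_t G_{A,0}$. From the previous paragraph this yields $G_{A,b}\cong_{qa}G_{A,0}$ but $G_{A,b}\not\cong_q G_{A,0}$. Setting $G=G_{A,b}$ and $H=G_{A,0}$ completes the proof.

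There is essentially no obstacle: all the substantive work resides in Theorem~\ref{thm:Slofstra'sGroup}, Corollary~\ref{cor:SynStratsAndReps}, and the trichotomy theorem above. The only point deserving a word of care is the normalization that every column of $A$ be nonzero, which is needed so that Theorem~\ref{thm:surjection}, and hence Corollary~\ref{cor:SynStratsAndReps}, is available; this is harmless, as a variable absent from every equation contributes a free $\mathbb{Z}/2$ factor to $\Gamma(A,b)$ and can simply be deleted without affecting any of the relevant objects.
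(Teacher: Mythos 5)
Your proof is correct and is exactly the argument the paper intends: Theorem~\ref{thm:Slofstra'sGroup} combined with Corollary~\ref{cor:SynStratsAndReps} produces a linear system whose game $\operatorname{synBCS}(A,b)$ has a perfect qa-strategy but no perfect q-strategy, and the equivalence $(1)\Leftrightarrow(2)$ of the preceding theorem, applied with $t=qa$ and $t=q$, then gives $G_{A,b}\cong_{qa}G_{A,0}$ while $G_{A,b}\not\cong_q G_{A,0}$, so $G=G_{A,b}$, $H=G_{A,0}$ work. The only quibble is your parenthetical claim that deleting a variable appearing in no equation ``alters neither $\Gamma(A,b)$'' --- it does remove a free $\mathbb{Z}/2$ factor, as you yourself note a few lines later --- but since the paper already performs this normalization harmlessly inside the proof of Corollary~\ref{cor:SynStratsAndReps}, no extra care is actually required on your part.
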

{\em Acknowledgements.}  The authors would like to thank N. Manor, A. Mehta and A. Winter for their valuable comments.


\bibliographystyle{alpha}
\bibliography{synBCS}

\newcommand{\etalchar}[1]{$^{#1}$}
\begin{thebibliography}{AMR{\etalchar{+}}16}

\bibitem[AMR{\etalchar{+}}16]{graphisom}
Albert Atserias, Laura Mancinska, David~E Roberson, Robert S{\'a}mal, Simone
  Severini, and Antonios Varvitsiotis.
\newblock Quantum and non-signalling graph isomorphisms.
\newblock {\em arXiv preprint arXiv:1611.09837}, 2016.

\bibitem[BO08]{BrownOzawa}
Nathaniel~P. Brown and Narutaka Ozawa.
\newblock {\em C$^*$-algebras and finite dimensional approximations}, volume~88
  of {\em Graduate Studies in Mathematics}.
\newblock Amer. Math. Soc., Providence, Rhode Island, 2008.

\bibitem[CW72]{Cook72}
WD~Cook and RJ~Webster.
\newblock Caratheodory'€™s theorem.
\newblock {\em Canadian Math. Bull}, 15:888, 1972.

\bibitem[DP16]{DyPa}
Kenneth Dykema and Vern~I. Paulsen.
\newblock Synchronous correlation matrices and connes' embedding conjecture.
\newblock {\em Journal of Mathematical Physics}, 57:015215, 2016.

\bibitem[HMPS17]{HMPS}
J.~William Helton, Kyle Meyer, Vern~I. Paulsen, and Matthew Satriano.
\newblock Algebras, synchronous games, and chromatic numbers of graphs.
\newblock {\em arXiv preprint, arXiv:1703.00960}, 2017.

\bibitem[Kir94]{Kirchberg:AmenableTraces}
Eberhard Kirchberg.
\newblock Discrete groups with kazhdan's property t and factorization property
  are residually finite.
\newblock {\em Math. Ann.}, 299:35--63, 1994.

\bibitem[OP15]{OrtizPaulsen}
Carlos~M Ortiz and Vern~I Paulsen.
\newblock Lov{\'a}sz theta type norms and operator systems.
\newblock {\em Linear Algebra and its Applications}, 477:128--147, 2015.

\bibitem[Oza13]{Oz}
N.~Ozawa.
\newblock About the connes embedding conjecture: algebraic approaches.
\newblock {\em Jpn. J. Math.}, 8:147--183, 2013.

\bibitem[PSS{\etalchar{+}}16]{estChromNo}
Vern~I Paulsen, Simone Severini, Daniel Stahlke, Ivan~G Todorov, and Andreas
  Winter.
\newblock Estimating quantum chromatic numbers.
\newblock {\em Journal of Functional Analysis}, 270(6):2188--2222, 2016.

\bibitem[Slo17]{Slofstra17}
William Slofstra.
\newblock The set of quantum correlations is not closed.
\newblock {\em arXiv preprint arXiv:1703.08618}, 2017.

\end{thebibliography}
\end{document}